\documentclass[a4paper,11pt]{amsart}
\pagestyle{headings}
\textheight=230mm 
\textwidth=160mm 
\topmargin=0mm
\oddsidemargin=0mm 
\evensidemargin=0mm
\parindent=0mm

\usepackage{amsmath}
\usepackage{amsthm}
\usepackage{amssymb}
\usepackage[latin1]{inputenc}
\usepackage[all,arc]{xy}
\usepackage[english]{babel}
\usepackage{color}

\theoremstyle{plain}
\newtheorem{thm}{Theorem}[section]
\newtheorem{prop}[thm]{Proposition}
\newtheorem{lem}[thm]{Lemma}
\newtheorem{cor}[thm]{Corollary}

\newtheorem*{assumption}{Assumption}
\theoremstyle{definition}

\theoremstyle{remark}
\newtheorem{ex}[thm]{Example}
\newtheorem{rem}[thm]{Remark}

\DeclareMathOperator{\scal}{scal}
\DeclareMathOperator{\Ric}{Ric}

\begin{document}
\title{Geometric flows and K\"ahler reduction}
\date{\today}

\author{Claudio Arezzo}
\address{The Abdus Salam International Centre for Theoretical Physics\\ Trieste, Italy
and \newline Universit\`a degli Studi di Parma, Italy.}
\email{arezzo@ictp.it}
\author{Alberto Della Vedova} 
\address{Universit\`a degli Studi di Parma, Italy.}
\email{alberto.dellavedova@unipr.it}
\author{Gabriele La Nave}
\address{University of Illinois, Urbana-Champaign}
\email{lanave@illinois.edu}

\begin{abstract}
We investigate how to obtain various flows of K\"ahler metrics on a fixed manifold as variations of K\"ahler reductions of a metric satisfying a given static equation on a higher dimensional manifold. We identify static equations that induce the geodesic equation for the Mabuchi's metric, the Calabi flow, the pseudo-Calabi flow of Chen-Zheng and the K\"ahler-Ricci flow. In the latter case we re-derive the V-soliton equation of La Nave-Tian. 
\end{abstract}

\maketitle

\section{Introduction}

This note is concerned with the description, in various instances, of what geometry one should impose on the total space of a K\"ahler manifold $(P,\omega, J)$ endowed with a Hamiltonian holomorphic circle action with moment map $\mu :P \to \mathbb R$, so that the {\it variation} of symplectic quotients with induced metrics, sometimes called \emph{K\"ahler reductions}, describes indeed a given flow on the initial quotient (naturally, up to diffeoforphisms). The mathematical literature is by now rich of examples of the interplay between geometry and K\"ahler (or merely symplectic) reductions, spanning from Guillemin and Sternberg fundamental papers \cite{gs1, gs2, gs3} to the more recent work of Burns and Guillemin \cite{bg} passing through the work of Futaki \cite{Fut}.

Our motivation for this study is twofold: one is based on trying to create a theoretical set-up and machinery where one can naturally incorporate flows with {\it surgery} in a context where such surgeries occur naturally in the Morse-theory of variations of K\"ahler reductions; the other is to try and understand convergence (or lack thereof) at infinite time in terms of what happens to the K\"ahler reduction when one meets the first critical point of the moment map (which is not necessarily the first time the reduction becomes "singular", cf. Example \ref{sing-quot}). 

To the authors knowledge, the first instance in which K\"ahler reduction was used to analyze the nature of finite time singularities of a given flow of metrics is in the work of La Nave and Tian, in connection to the K\"ahler-Ricci flow \cite{LT}. There it is shown that the K\"ahler-Ricci flow on a K\"ahler manifold $M$ is loosely speaking equivalent to a static equation, dubbed the $V$-soliton equation since it is of soliton type, on a manifold $P$ endowed with a Hamiltonian holomorphic  circle action, which has $M$ as one of its K\"ahler reductions.

We continue with this idea in this paper, where we investigate in fact how to obtain various flows of metrics as variations of K\"ahler reductions of a metric on the total space satisfying a given static equation.
This approach is somewhat different from the usual one inspired by Kaluza-Klein theories, where one usually is interested in the rich nature of quotients of a given special metric, such as a K\"ahler-Einstein metric (cf. \cite{bg,Fut}). Our take could be characterized as being somewhat opposite and more along the lines of a Geometric-flow version of variation of GIT or symplectic quotients (cf. \cite{gs3} and references therein) whereby we analyze the {\it variation} of metrics under the variation of reductions and identify equations on the total space for which the variation of metrics follows a specified flow. 

Specifically, we identify equations on the total space that induce:
\begin{itemize}
\item
 the Geodesic equation in the space of K\"ahler potentials (with respect to Mabuchi $L^2$-metric) in Theorem \ref{thm::Geodesics}; 
 \item
 the Calabi flow in Theorem \ref{thm::Calabiflow}; 
 \item
  the pseudo-Calabi flow of Chen and Zheng in Theorem \ref{thm::pseudoCalabiflow};
 \item
 and finally we re-derive La Nave and Tian's $V$-soliton equation (cf. \cite{LT}) in Theorem \ref{thm::nKRflow} for the normalized
 K\"ahler-Ricci flow and its variant for the unnormalized one in Theorem \ref{thm::KRflow}.
\end{itemize}

Arguably, the major difference in approach between the current paper and \cite{LT} is the fact that for most of our applications, such as the Calabi flow, one needs only --and in fact must--  take the space whose K\"ahler reductions represent the flow to simply be a product $P= M\times A$ with  $A\subset \mathbf C^*$ some annulus (cf. Theorem \ref{theorem::total-space} in section \ref{section::total-space}). In the case of the normalized K\"abler-Ricci flow this is also true, as long as the initial metric is canonical or anticanonical, as the case maybe. In particular, the case of interest in \cite{LT}, namely the finite-time singularity of the K\"ahler-Ricci flow in non-(anti)-canonically polarized metrics, is never of this nature.

In the last section of this paper we explain how these new equations are indeed equivalent to the associated geometric flows; namely, given a solution to the flow on $M$, how to construct the relevant structures on the product $P$ to get a solution to the new equations.

Along the way, we show how natural geometric quantities of the reduced metric $g_\tau$ on $M$ comes from \emph{reduction} (see section \ref{ss::Kr} for precise definition) of suitable quantities on $P$.
For example in Proposition \ref{prop::riccired} and Corollary \ref{cor::scalred} we show that there exist a one form $\rho$ and a smooth function $R$ on $P$ that descend respectively to the Ricci form and to the scalar curvature of the reduced metric $g_\tau$ on $M$.
In the same circle of ideas, we show in Lemma \ref{lem::MAred} and Corollary \ref{cor::Deltared} that the Monge-Amp\`ere operator and, in particular, the Laplacian of the reduced metric $g_\tau$ are induced by suitable operators on $P$.

\section{Hamiltonian circle actions on K\"ahler manifolds}\label{sec::Hamcircact}

We are mainly interested in Hamiltonian holomorphic circle actions on K\"ahler manifolds.
The purpose of this section is to investigate the geometry of the K\"ahler reduction in connection with the one of the starting manifold. 
For simplicity we consider just the case of a semi-free action. This means that it is free away from the fixed point set, or equivalently that there are no finite non-trivial isotropy groups.
In this case, any non-fixed point has a neighborhood equivariantly biholomorphic to the product of a fixed manifold with an annulus endowed with the standard circle action.
The reduced manifold can be recovered by patching together the reductions of these invariant neighborhoods.
Since we are just interested in local properties of the reduced manifolds, we can assume that our starting manifold is itself a product.
In particular, this argument assures that all the statement of this sections are true for any K\"ahler manifold endowed with a semi-free holomorphic Hamiltonian circle action, with the exception of Lemma \ref{lem::HamAct} and Proposition \ref{prop::redform} (which will only hold locally).

\bigskip 
\subsection{The total space is a product}
Given a (non-necessarily compact) connected complex $n$-fold $M$, consider the product $P = M \times A$, where $A = \{ w \in \mathbf C \mbox{ s.t. } r < |w| < R\}$ is the annulus of radii $r,R > 0$.
We allow $r=0$ or $R=+\infty$.
The standard circle action on $P$ defined by $e^{i\theta} \cdot (x,w) = (x,e^{i \theta} w)$ is generated by the real vector field
$$ V = i \left( w \frac{\partial}{\partial w} - \bar w \frac{\partial}{\partial \bar w} \right). $$
Moreover one has
\begin{equation}\label{eq::JV}
JV = - \left( w \frac{\partial}{\partial w} + \bar w \frac{\partial}{\partial \bar w} \right) = - 2s \frac{\partial}{\partial s},
\end{equation}
where $J$ denotes the complex structure of $P$ and $s$ is the smooth function on $P$ defined by $s(x,w) = w \bar w$.
We will consider K\"ahler metrics on $P$ that makes Hamiltonian the standard circle action.
By definition the K\"ahler form $\omega$ of such a metric satisfies $i_V \omega = d\mu$ for some moment map $\mu : P \to \mathbf R$.
Let $\pi : P \to M$ be the projection on the first factor.

\begin{lem}\label{lem::HamAct}
Let $g$ be a K\"ahler metric on $P$ with K\"ahler form $\omega$. The standard circle action is Hamiltonian with moment map $\mu : P \to \mathbf R$ if and only if there exist a K\"ahler form $\sigma$ on $M$, a smooth invariant function $\phi : P \to \mathbf R$, and a constant $c \in \mathbf R$ such that
\begin{enumerate}\renewcommand{\theenumi}{\roman{enumi}}
\item $ \omega = \pi^*\sigma + dd^c \phi $, \label{omega}
\item $\mu = JV (\phi) + c$. \label{mu}
\end{enumerate}
Any other such triple $(\tilde \sigma, \tilde \phi, \tilde c)$ satisfies
\begin{equation}\label{eq::tildata}
\left\{
\begin{array}{l}
\tilde \sigma = \sigma + dd^c u \\
\tilde \phi = \phi - \pi^*u + \frac{\tilde c - c}{2} \log s + b,
\end{array}
\right.
\end{equation}
for some $b \in \mathbf R$, and some smooth function $u$ on $M$ such that $\sigma + dd^c u > 0$.
\end{lem}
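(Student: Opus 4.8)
The plan is to prove the two directions of the equivalence separately, obtain $c$ as a forced constant, and then settle the non-uniqueness by a direct analysis of the difference of two potentials.

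\emph{Sufficiency.} Assuming (i)--(ii), I would verify $i_V\omega = d\mu$ directly. Since $\sigma$ lives on $M$ and $V$ is vertical, $\pi_*V = 0$ gives $i_V\pi^*\sigma = 0$, so $i_V\omega = i_V dd^c\phi$. Because $V$ generates a holomorphic circle action and $\phi$ is invariant, $\mathcal L_V$ commutes with $d^c$ and $\mathcal L_V\phi = 0$, whence $\mathcal L_V d^c\phi = d^c \mathcal L_V\phi = 0$. Cartan's formula then yields $i_V dd^c\phi = \mathcal L_V d^c\phi - d\,i_V d^c\phi = -d(i_V d^c\phi)$. With the convention $(d^c\phi)(X) = -d\phi(JX)$ one has $i_V d^c\phi = -JV(\phi)$, so $i_V\omega = d(JV(\phi)) = d\mu$, as required.

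\emph{Necessity.} Conversely, a Hamiltonian form is automatically invariant, since $\mathcal L_V\omega = d\,i_V\omega + i_V d\omega = dd\mu = 0$. I would set $\sigma := \iota^*\omega$ for the inclusion $\iota\colon x\mapsto (x,w_0)$ of a fibre; as the restriction of a Kähler form to a complex submanifold it is Kähler on $M$. The form $\eta := \omega - \pi^*\sigma$ is then closed, invariant, of type $(1,1)$, and satisfies both $\iota^*\eta = 0$ and $i_V\eta = d\mu$. The crux is to produce a global invariant $\phi$ with $dd^c\phi = \eta$, and I expect this to be the main obstacle: the annulus has $H^1(A)=\mathbf R$, so a priori $[\eta]$ could have a nonzero component in $H^1(M)\otimes H^1(A)\subset H^2(M\times A)$, with $\log s$ serving as the invariant pluriharmonic generator responsible for exactly this class. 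The existence of a global moment map removes the obstruction: contraction with $V$ sends closed invariant forms to closed invariant forms, descends to de~Rham cohomology by averaging, and is injective on the $H^1(M)\otimes H^1(A)$-summand because $d\theta(V)=1$; since $i_V\eta = d\mu$ is exact and $\iota^*\eta=0$ kills the $H^2(M)$-summand, one gets $[\eta]=0$. Using that $\mu$ is strictly monotone along each annulus fibre (so that $\mu$ may replace $s$ as a fibre coordinate), I would then recover $\phi$ by the momentum/Legendre-transform construction adapted to the product $M\times A$, producing a smooth invariant potential. Finally I set $c := \mu - JV(\phi)$; this is constant because the sufficiency computation gives $d(JV(\phi)) = i_V dd^c\phi = i_V\eta = d\mu$ and $P$ is connected, and then (i)--(ii) hold by construction.

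\emph{Non-uniqueness.} Given a second triple $(\tilde\sigma,\tilde\phi,\tilde c)$, I would study the invariant function $\rho := \tilde\phi - \phi$. Subtracting the two instances of (ii) gives $JV(\rho) = c - \tilde c$, which by \eqref{eq::JV} reads $-2s\,\partial_s\rho = c-\tilde c$; integrating in $s$ forces $\rho = \tfrac{\tilde c - c}{2}\log s + \pi^* v$ for a function $v$ on $M$, the $s$-independent integration term. Substituting into the difference of the two instances of (i) and using $dd^c\log s = 0$ gives $\pi^* dd^c v = dd^c\rho = \pi^*(\sigma - \tilde\sigma)$, hence $\tilde\sigma = \sigma - dd^c v$ on $M$. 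Setting $u := -v$ then produces $\tilde\sigma = \sigma + dd^c u$ and $\tilde\phi = \phi - \pi^* u + \tfrac{\tilde c - c}{2}\log s$, which is \eqref{eq::tildata}; the constant $b$ records the additive-constant ambiguity in $u$, while $\sigma + dd^c u = \tilde\sigma > 0$ holds because $\tilde\sigma$ is Kähler.
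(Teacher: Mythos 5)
Your sufficiency computation and your treatment of the non-uniqueness are correct and essentially identical to the paper's argument (Cartan's formula plus invariance of $\phi$ for the first; fibrewise integration of $JV(\tilde\phi-\phi)=c-\tilde c$ via \eqref{eq::JV} for the second). The gap is in the necessity direction, which is where the real content of the lemma lies. Your cohomological argument does establish $[\eta]=0$ in $H^2(P)$ --- the K\"unneth/contraction reasoning, with invariant representatives obtained by averaging, is sound --- but exactness of $\eta$ is much weaker than what you need. The manifold $P=M\times A$ is non-compact, so the $dd^c$-lemma is not available: an exact real $(1,1)$-form need not be $dd^c$ of any global smooth function, let alone of an invariant one. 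The whole burden of the proof is thus shifted onto the sentence invoking ``the momentum/Legendre-transform construction adapted to the product $M\times A$'', which is left entirely as a black box; that unproved step is precisely the theorem.

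Moreover, this missing step cannot be soft, because the potential is essentially forced: if an invariant $\phi$ with $dd^c\phi=\eta$ exists, your own sufficiency computation gives $d(JV(\phi))=i_V\eta=d\mu$, hence $JV(\phi)=\mu-c$ for some constant $c$, i.e. $-2s\,\partial_s\phi=\mu-c$ by \eqref{eq::JV}. So $\phi$ is determined along each annulus fibre, up to a function pulled back from $M$ and the constant $c$, as a fibrewise integral of $\mu$. This is exactly the potential the paper writes down explicitly, $\phi(x,s)=\int_{r^2}^{s}\frac{\mu(x,t)-c}{2t}\,dt$, which satisfies \eqref{mu} by construction; the substantive work is then to prove that $\omega-dd^c\phi$ is \emph{basic} --- circle invariant, annihilated by $i_V$ and $i_{JV}$, and satisfying $L_{JV}(\omega-dd^c\phi)=0$ --- so that it descends to a form $\sigma$ on $M$ giving \eqref{omega}. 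The paper carries this out through the explicit decomposition \eqref{eq::omegadec2} of $\omega$ and the matching decomposition \eqref{eq::ddcphi} of $dd^c\phi$, followed by the Lie-derivative computation. Your proposal contains no substitute for this computation, so completing your sketch would in effect reproduce the paper's proof. (One genuinely nice feature of your setup, worth keeping: choosing $\sigma=\iota^*\omega$ as the restriction to a slice makes the positivity of $\sigma$ automatic, a point the paper's construction handles less transparently.)
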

\begin{proof}
Assuming there exists $(\sigma, \phi, c)$ as in the statement such that \eqref{omega} and \eqref{mu} hold, one has
$$ i_V \omega = L_V d^c \phi - d i_V d^c \phi = d JV(\phi) = d\mu. $$
Since $\omega$ is closed, the standard circle action turns out to be Hamiltonian with moment map $\mu$.

For any $(\tilde \sigma, \tilde \phi, \tilde c)$ giving the same $\omega$ and $\mu$, one has
\begin{equation}\label{sistema} 
\left\{
\begin{array}{l}
\pi^*(\tilde \sigma - \sigma) + dd^c(\tilde \phi - \phi) = 0 \\
JV(\tilde \phi - \phi) + \tilde c - c = 0
\end{array}
\right.
\end{equation}
Thanks to \eqref{eq::JV}, from second equation one easily finds that
$$ \tilde \phi - \phi = \frac{\tilde c - c}{2} \log s + b -\pi^*u, $$
for some constant $b \in \mathbf R$ and some smooth function $u$ on $M$.
Substituting in the first equation of \eqref{sistema} then proves \eqref{eq::tildata}.

It remains to show that any K\"ahler form $\omega$ on $P$ making Hamiltonian the standard circle action with moment map $\mu$ satisfies \eqref{omega} and \eqref{mu} for some triple $(\sigma,\phi,c)$.  
To this end consider the decomposition
\begin{equation}\label{eq::omegadec}
\omega = \eta + \alpha \wedge d^c \log s + d \log s \wedge \beta + \gamma d\log s \wedge d^c \log s, 
\end{equation}
where $\eta$ is a smooth section of the bundle $\pi^*\Lambda^{1,1}T^*M$, $\alpha,\beta$ are smooth sections of $\pi^*T^*M$, and $\gamma$ is a smooth function on $P$.
Imposing that $\mu$ is a moment map gives
$$ d\mu = i_V \omega = -2\alpha -2\gamma d\log s, $$
$$ d^c \mu = i_{JV} \omega = -2\beta -2 \gamma d^c \log s, $$
$$ JV(\mu) = \omega (V,JV) = 4\gamma, $$
whence, substituting in \eqref{eq::omegadec} after easy calculations it follows
\begin{equation}\label{eq::omegadec2}
\omega = \eta - \frac{1}{2} d\mu \wedge d^c \log s -\frac{1}{2} d \log s \wedge d^c \mu - \frac{1}{4} JV(\mu) d\log s \wedge d^c \log s.
\end{equation}
The smooth circle invariant function $\phi$ on $P$ defined by
$$ \phi(x,s) = \int_{r^2}^{s} \frac{\mu(x,t)-c}{2t}dt, $$
clearly satisfies \eqref{mu}.
On the other hand, one can easily check that the following decomposition holds (cf. \cite[Lemma 3.4]{LT}):
\begin{equation}\label{eq::ddcphi}
dd^c \phi = d_Md^c_M \phi -\frac{1}{2} d \mu \wedge d^c \log s -\frac{1}{2} d \log s \wedge d^c \mu -\frac{1}{4} JV(\mu) d \log s \wedge d^c \log s,
\end{equation}
where $d_Md^c_M \phi$ denotes the projection of $dd^c \phi$ on the sub-bundle $\pi^*\Lambda^{1,1}T^*M \subset \Lambda^{1,1}T^*P$.
Comparing $dd^c \phi$ with \eqref{eq::omegadec2} gives
\begin{equation}
\omega - dd^c \phi = \eta - d_Md^c_M \phi.
\end{equation}
The form $\omega - dd^c \phi$ is clearly circle invariant, and from the right hand side of equation above it follows that it vanish on the distribution generated by $V$ and $JV$. Moreover one has
\begin{multline*}
L_{JV}(\omega - dd^c \phi)
= d(i_{JV}\omega - i_{JV} dd^c \phi) \\
= dd^c \mu - d i_{JV} \left( d_Md^c_M \phi - \frac{1}{2} d \mu \wedge d^c \log s - \frac{1}{2} d \log s \wedge d^c \mu - \frac{1}{4} JV(\mu) d \log s \wedge d^c \log s \right)=0.
\end{multline*}
Thus there exists a K\"ahler form $\sigma$ on $M$ such that $\omega - dd^c \phi = \pi^* \sigma$, whence \eqref{omega} follows.
\end{proof}

\subsection{K\"ahler reduction}\label{ss::Kr}
Keeping notation of section 	\ref{sec::Hamcircact}, fix a K\"ahler metric $g$ on $P$ that makes Hamiltonian the standard circle action with moment map $\mu$.
Thanks to Lemma \ref{lem::HamAct}, this amounts to choosing a K\"ahler form $\sigma$ on $M$, a smooth invariant function $\phi$ on $P$, and a constant $c \in \mathbf R$ satisfying conditions \eqref{omega} and \eqref{mu} of Lemma \ref{lem::HamAct}.
Let $\tau$ be a regular value of $\mu$, so that the circle action is free on the level set $S_\tau=\mu^{-1}(\tau)$.
With no additional assumption on $\mu$, the level set $S_\tau$ is not necessarily compact even if $M$ is, and its quotient 
$$ M_\tau = \pi (S_\tau) \subset M $$
may not be closed.
Such a situation is illustrated by the following example.
\begin{ex}\label{sing-quot}
Consider the family $\pi : X \to \mathbf C$, where 
$$ X= \left\{ (z,w) \in \mathbf {CP}^2 \times \mathbf C \mbox{ s.t. } z_0z_2 - z_1^2 w = 0 \right\},$$
and $\pi$ is just the coordinate $w$.
Is easy to check that endowing $\mathbf{CP}^2$ with the circle action $e^{i\theta} \cdot z = (e^{i\theta}z_0,z_1,z_2)$, and $\mathbf C$ with the standard one, makes $X$ invariant for the diagonal action on $\mathbf {CP}^2 \times \mathbf C$. Thus $X$ has a circle action and one readily verifies that $\pi$ turns out to be an equivariant map.
Moreover it is easy to see that removing the central fiber $\pi^{-1}(0)$ from $X$ gives rise to an invariant submanifold $X^*$ biholomorphic to $P=\mathbf {CP}^1 \times \mathbf C^*$.
An explicit bihomlomorphism $\Phi: P \to X^*$ is given by
$$ \Phi(x,w) = (wx_0^2,x_0x_1,x_1^2,w). $$
Note that $\Phi$ becomes equivariant when $P$ is endowed with the standard action on the $\mathbf C^*$ factor, thus we are in the general situation considered above.

The product metric $\omega_{FS} + \omega_{Euclid}$ on $\mathbf {CP}^2 \times \mathbf C$ is Hamiltonian with moment map
$$ (z,w) \mapsto -\frac{|z_0|^2}{|z|^2} - |w|^2, $$
whence it follows readily that a moment map for the circle action on $P$ (endowed with the induced metric) is given by
$$ \mu(x,w) = -|w|^2 \frac{(1+|w|^2)|x_0|^4 + |x_0|^2|x_1|^2 + |x_1|^4}{|w|^2|x_0|^4+|x_0|^2|x_1|^2+|x_1|^4}. $$  
By means of easy calculations one can check that the image of $\mu$ is the interval $(-\infty,0)$, and one has biholomorphisms $M_\tau \simeq \mathbf {CP}^1$ for $\tau < -1$, and $M_\tau \simeq \mathbf {CP}^1 \setminus \{(0:1)\}$ for $-1 \leq \tau < 0$.
Moreover the reduced metric $\omega_\tau$ on $M_\tau$ is smooth for $\tau <-1$, conic of angle $\pi$ for $\tau =-1$ and it is incomplete for $\tau \in (-1,0)$.
In the latter case, its completion recovers $\mathbf {CP}^1$.

Finally note that $\mu$ has no critical points, since the circle action on $P$ has no fixed points. Nonetheless $\mu$ is clearly not {\it proper}.
\end{ex}

The example above shows that, due to the non-properness of the moment map $\mu$, the topology of reductions can still change when $\tau$ varies.
On the other hand, if the reduced manifold $M_\tau$ coincides with $M$, then the same holds for $M_{\tau + \varepsilon}$ for all $\varepsilon$ sufficiently small.

This example suggests to define, when $\mu$ is not proper,  the K\"ahler reduction $(M_\tau , \omega _\tau)$ as the metric completion of the standard reduction. On the other hand, for the purposes of this paper, we only need the following:
\begin{assumption}
$\tau \in \mu(P)$ is chosen so that $M_\tau$ is coincides with $M$. When $\tau$ varies, it does so in an interval where all the reductions coincide with $M$.  
\end{assumption}

Let $\iota_\tau : S_\tau \to P$ be the inclusion and let $\pi_\tau : S_\tau \to M_\tau$ be the projection on the quotient, namely the restriction of $\pi$ to the level set $S_\tau$, so that one has the following commutative diagram:
$$
\xymatrix{ 
S_\tau \ar[d]^{\pi_\tau} \ar[r]^{\iota_\tau} & P \ar[d]^\pi \\
M_\tau \ar[r] & M
}
$$

Any invariant smooth function $f$ on $P$ descends to a smooth function $f_\tau$ on $M_\tau$ defined by $$\iota_\tau^* f = \pi_\tau^*f_\tau.$$
A simple but crucial example is constituted by the function $s$, which descends to a function $s_\tau$.
Given any $f$, one can express $f_\tau$ by means of $s_\tau$.
Indeed, thanks to invariance, $f$ has the form $f=f(\pi,s)$, whence obviously it follows $f_\tau(x) = f(x,s_\tau(x))$. 

More generally, any invariant differential $r$-form $\eta$ on $P$ satisfying $\iota_\tau^*(i_V \eta) = 0,$ descends to an $r$-form $\eta_\tau$ on $M_\tau$ defined by
$$ \iota_\tau^*\eta = \pi_\tau^*\eta_\tau. $$
This defines a reduction map, that is a linear map
$$ \left\{ \eta \in \Omega^*(P) \mbox{ s.t. } L_V \eta = 0, \,\, \iota_\tau^*(i_V \eta) = 0 \right\} \to \Omega^*(M_\tau), 
\quad \eta \mapsto \eta_\tau,$$
satisfying
\begin{equation}\label{eq::homDGA}
(\eta \wedge \xi)_\tau = \eta_\tau \wedge \xi_\tau,
\qquad
(d\eta)_\tau = d\eta_\tau,
\qquad
(\pi^*\zeta)_\tau = \zeta,
\end{equation}
as can be readily checked.

In particular, the K\"ahler form $\omega$ of the metric $g$ descends to a non-degenerate two form $\omega_\tau$ on $M_\tau$.
Moreover $\omega_\tau$ turns out to be compatible with the complex structure of $M_\tau$ induced by the one of $M$, so that it defines a K\"ahler metric $g_\tau$ on $M_\tau$.

\begin{lem}\label{lem::dertau}
For every invariant functions $f \in C^\infty(P)$ one has:
\begin{equation}
\frac{\partial f_\tau}{\partial \tau}= \left(\frac{JV(f)}{|V|^2}\right)_\tau.
\end{equation}
\end{lem}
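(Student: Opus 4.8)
The plan is to reduce everything to an explicit computation in the fiber coordinate $s$, exploiting the invariance of $f$ together with the defining equation of the level set $S_\tau$. Since $f$ is circle-invariant it has the form $f = f(x,s)$ with $x \in M$ and $s = w\bar w$, exactly as recorded in the discussion of the reduction map; likewise $\mu = \mu(x,s)$. Because $\tau$ is a regular value, the circle action is free on $S_\tau$ and $V$ is nowhere vanishing there, so $|V|^2 > 0$ and the right-hand side makes sense. The implicit function theorem then lets me solve $\mu(x,s) = \tau$ for $s = s_\tau(x)$ on the region where $M_\tau = M$, and by the very definition of the reduction one has $f_\tau(x) = f(x,s_\tau(x))$.

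First I would differentiate this identity in $\tau$. The chain rule gives $\partial f_\tau/\partial\tau = (\partial_s f)(x,s_\tau(x))\,(\partial s_\tau/\partial\tau)$, while differentiating $\mu(x,s_\tau(x)) = \tau$ in $\tau$ yields $(\partial_s \mu)\,(\partial s_\tau/\partial\tau) = 1$, so that $\partial s_\tau/\partial\tau = 1/\partial_s \mu$. Combining these,
\[
\frac{\partial f_\tau}{\partial\tau} = \frac{\partial_s f}{\partial_s \mu}\Big|_{s = s_\tau(x)}.
\]

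Next I would rewrite the right-hand side of the claim in the same terms. From \eqref{eq::JV} one has $JV = -2s\,\partial_s$, hence $JV(f) = -2s\,\partial_s f$ and $JV(\mu) = -2s\,\partial_s \mu$. The key identity is that the squared norm of $V$ coincides with $JV(\mu)$: using the K\"ahler relation $\omega(X,Y) = g(JX,Y)$ together with the $J$-invariance of $g$ one gets $|V|^2 = g(V,V) = g(JV,JV) = \omega(V,JV)$, and since $i_V\omega = d\mu$ this equals $d\mu(JV) = JV(\mu)$. Therefore $JV(f)/|V|^2 = (\partial_s f)/(\partial_s \mu)$ as invariant functions on $P$, and the factors $-2s$ cancel; its reduction is then computed simply by setting $s = s_\tau(x)$.

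Comparing the two computations finishes the proof, since both sides equal $(\partial_s f)/(\partial_s \mu)$ evaluated at $s = s_\tau(x)$. The only point requiring genuine care is the identification $|V|^2 = JV(\mu)$, which I expect to be the crux: it is precisely what makes the fiber-derivative factor drop out of the ratio and forces the cancellation of the $-2s$ terms. Everything else is the implicit function theorem applied to the regular level set plus the chain rule.
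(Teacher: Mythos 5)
Your proof is correct and follows essentially the same route as the paper: both differentiate the identity $f_\tau(x) = f(x,s_\tau(x))$ in $\tau$ via the chain rule, determine $\partial s_\tau/\partial\tau$ by running the same computation for $f=\mu$ (your implicit-function-theorem step is exactly the paper's substitution $f=\mu$, using $\mu_\tau=\tau$), and then cancel using $JV=-2s\,\partial_s$ and $JV(\mu)=|V|^2$. The only cosmetic difference is that you spell out the verification of $JV(\mu)=|V|^2$, which the paper invokes without comment.
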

\begin{proof}
Differentiating the identity $f_\tau(x) = f(x,s_\tau(x))$ gives
\begin{equation}\label{derftauparial}
\frac{\partial f_\tau}{\partial \tau} = \left(\frac{\partial f}{\partial s}\right)_\tau \frac{\partial s_\tau}{\partial \tau} = -\frac{1}{2} \left(JV(f)\right)_\tau \frac{\partial \log s_\tau}{\partial \tau}.
\end{equation}
Taking $f=\mu$, since $\mu_\tau=\tau$ and $JV(\mu)=|V|^2$, one gets
\begin{equation}\label{eq::dertaustau}
\frac{\partial \log s_\tau}{\partial \tau} = \left( - \frac{2}{|V|^2} \right)_\tau.
\end{equation}
Thus the thesis follows substituting \eqref{eq::dertaustau} in \eqref{derftauparial}.
\end{proof}

\begin{lem}\label{lem::dcFtau}
For every invariant function $f \in C^\infty(P)$, the following holds
\begin{equation}
d^c f_\tau = \left( d^c f - \frac{JV(f)}{|V|^ 2} d^c \mu \right)_\tau.
\end{equation}
\end{lem}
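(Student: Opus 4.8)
The plan is to exhibit the one-form appearing on the right-hand side, namely
\[ \xi := d^c f - \frac{JV(f)}{|V|^2}\, d^c\mu, \]
as a form lying in the domain of the reduction map and to identify its reduction $\xi_\tau$ with $d^c f_\tau$. Accordingly the argument splits into two parts: first checking reducibility of $\xi$, and then the identification on $S_\tau$.

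For reducibility I would verify that $\xi$ is invariant and that $i_V\xi=0$. Invariance is immediate: since the circle action is holomorphic, $L_V$ commutes with $d^c$, and $V(f)=V(\mu)=0$ because $f,\mu$ are invariant, so $L_V d^c f=L_V d^c\mu=0$; moreover $JV(f)$ and $|V|^2$ are invariant (as $[V,JV]=0$ and $g$ is invariant), whence the coefficient $JV(f)/|V|^2$ is annihilated by $V$. For the contraction, with the sign convention $d^c h(\cdot)=-dh(J\cdot)$ fixed in the proof of Lemma \ref{lem::HamAct}, one has $i_V d^c f=-JV(f)$ and $i_V d^c\mu=-JV(\mu)=-|V|^2$, the last equality because $JV(\mu)=\omega(V,JV)=|V|^2$. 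Therefore
\[ i_V\xi = -JV(f) - \frac{JV(f)}{|V|^2}\bigl(-|V|^2\bigr) = 0, \]
so in particular $\iota_\tau^*(i_V\xi)=0$ and $\xi_\tau$ is well defined.

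For the identification, by injectivity of $\pi_\tau^*$ it suffices to prove the equality of one-forms $\pi_\tau^*(d^c f_\tau)=\iota_\tau^*\xi$ on $S_\tau$, which I would check by evaluating on tangent vectors. Write $H=\spn(V,JV)^\perp$ for the horizontal distribution; since $J$ is $g$-orthogonal it preserves $H$, and since $T S_\tau=\ker d\mu=(JV)^\perp$ one has the splitting $T S_\tau=\mathbf R V\oplus H$ with $JH=H\subset TS_\tau$. On the vertical generator $V$ both sides vanish, since $d\pi_\tau(V)=0$ and $i_V\xi=0$. On a horizontal $Z\in H$ the correction term drops out, because $d^c\mu(Z)=-d\mu(JZ)=0$ as $JZ\in H\subset\ker d\mu$, so $\iota_\tau^*\xi(Z)=d^c f(Z)=-df(JZ)$. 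On the other hand $d\pi_\tau$ restricts to an isomorphism $H\to TM_\tau$ intertwining $J$ and the reduced complex structure (the projection $\pi$ being holomorphic and the reduced structure being the one induced from $M$), so $JZ$ is the horizontal lift of $J_M\,d\pi_\tau(Z)$; differentiating $\pi_\tau^* f_\tau=\iota_\tau^* f$ then gives $\pi_\tau^*(d^c f_\tau)(Z)=-df_\tau(J_M d\pi_\tau Z)=-df(JZ)$, matching the previous expression. Hence $\pi_\tau^*(d^c f_\tau)=\pi_\tau^*\xi_\tau$, and the claim follows.

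The only genuinely delicate point is the complex-structure bookkeeping in the last step: one must use that $d^c$ on the quotient is computed with the complex structure induced from $M$ and that, because $\pi$ is holomorphic and $H$ is $J$-invariant, the horizontal lift of $J_M\bar Z$ is exactly $JZ$. Everything else rests on the observation that the term $\frac{JV(f)}{|V|^2}d^c\mu$ is engineered precisely to cancel the vertical part $i_V d^c f=-JV(f)$, thereby rendering $\xi$ horizontal; an alternative but messier route would decompose $d^c f$ against $d\log s$ and $d^c\log s$ as in \eqref{eq::omegadec} and track the coefficients directly.
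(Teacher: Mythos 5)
Your proof is correct, but it follows a genuinely different route from the paper's. The paper exploits the product structure of $P$: an invariant $f$ is a function of $(x,s)$, so one can differentiate the identity $f_\tau(x)=f(x,s_\tau(x))$ by the chain rule, then compute $d^c\log s_\tau$ by specializing to $f=\mu$ (using $\mu_\tau=\tau$ and $JV(\mu)=|V|^2$, this yields $d^c \log s_\tau = \left( \tfrac{2}{|V|^2} d^c \mu + d^c \log s\right)_\tau$, i.e.\ \eqref{eq::dcstau}), and finally substitute back --- the same device that drives Lemma \ref{lem::dertau}, with the formula for $d^c\log s_\tau$ as a reusable byproduct. You instead never invoke the product structure directly: you observe that $\xi = d^c f - \tfrac{JV(f)}{|V|^2}d^c\mu$ is engineered to be basic (invariant, with $i_V\xi=0$, which is stronger than the required $\iota_\tau^*(i_V\xi)=0$), and then identify $\xi_\tau$ with $d^cf_\tau$ by testing both pullbacks against the splitting $TS_\tau=\mathbf R V\oplus H$, where $H=\spn(V,JV)^\perp$; the only structural inputs are holomorphy of $\pi$, $J$-invariance of $H$, and $H\subset\ker d\mu$. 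All the individual steps check out (the sign convention $d^ch(\cdot)=-dh(J\cdot)$ is indeed the one forced by the computation in Lemma \ref{lem::HamAct}, and $JV(\mu)=\omega(V,JV)=|V|^2$ is as in the paper). What your approach buys is generality and transparency: since it only uses that the reduced complex structure is the one transported through the horizontal distribution, it extends verbatim to the non-product, semi-free setting alluded to at the start of Section \ref{sec::Hamcircact}, whereas the paper's chain-rule computation is tied to writing $f=f(\pi,s)$. What it costs is the explicit formula \eqref{eq::dcstau}, which the paper's method produces along the way and which is convenient in later computations.
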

\begin{proof}
Differentiating the identity $f_\tau(x) = f(x,s_\tau(x))$ gives
\begin{equation*}
d^c f_\tau 
= \left( d^c f - \frac{\partial f}{\partial s} d^c s\right)_\tau + 
\left(\frac{\partial f}{\partial s}\right)_\tau d^c s_\tau
= \left( d^c f +\frac{1}{2} JV(f) d^c \log s\right)_\tau - \frac{1}{2} \left(JV(f)\right)_\tau d^c \log s_\tau.
\end{equation*}
Taking $f=\mu$, since $\mu_\tau=\tau$ and $JV(\mu)=|V|^2$, one gets
\begin{equation}\label{eq::dcstau}
d^c \log s_\tau = \left( \frac{2}{|V|^2} d^c \mu + d^c \log s\right)_\tau,
\end{equation}
whence the thesis follows substituting in equation above.
\end{proof}

\begin{prop}\label{prop::redform}
In the situation of Lemma \ref{lem::HamAct}, the K\"ahler form of the reduced metric $g_\tau$ satisfies
$$ \omega_\tau = \sigma + dd^c \psi_\tau, $$
where 
$$ \psi = \phi + \frac{\mu-c}{2} \log s.$$
\end{prop}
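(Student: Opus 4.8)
The plan is to reduce everything to the identity $\omega = \pi^*\sigma + dd^c\phi$ of Lemma \ref{lem::HamAct} and then to prove the single identity $dd^c\psi_\tau = (dd^c\phi)_\tau$, where on the left $d$ and $d^c$ are the operators of $M_\tau$ acting on the reduced function $\psi_\tau$. Applying the reduction map to $\omega = \pi^*\sigma + dd^c\phi$ and using $(\pi^*\sigma)_\tau = \sigma$ from \eqref{eq::homDGA} gives at once $\omega_\tau = \sigma + (dd^c\phi)_\tau$; here $dd^c\phi$ is reducible because $i_V dd^c\phi = i_V\omega = d\mu$ pulls back to zero on $S_\tau$. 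Thus the whole statement is equivalent to $dd^c\psi_\tau = (dd^c\phi)_\tau$.

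The crucial tool is Lemma \ref{lem::dcFtau}, which computes $d^c$ of a reduced function. First I would record the elementary derivatives $JV(\phi) = \mu - c$ (this is exactly \eqref{mu}), $JV(\mu) = |V|^2$, and $JV(\log s) = -2$, the last following from \eqref{eq::JV}. A short Leibniz computation then yields $JV(\psi) = \tfrac12 |V|^2 \log s$, so that $JV(\psi)/|V|^2 = \tfrac12\log s$. Feeding this into Lemma \ref{lem::dcFtau} applied to $f = \psi$, the product-rule term $\tfrac12\log s\, d^c\mu$ arising from $d^c\bigl(\tfrac{\mu-c}{2}\log s\bigr)$ cancels exactly against the correction $-\tfrac{JV(\psi)}{|V|^2}d^c\mu$, leaving the clean expression
$$ d^c\psi_\tau = \Bigl( d^c\phi + \frac{\mu-c}{2}\, d^c\log s \Bigr)_\tau. $$
I would stress that this cancellation is precisely what dictates the shape of $\psi$: the summand $\tfrac{\mu-c}{2}\log s$ is added so that $JV(\psi)$ becomes proportional to $d^c\mu$ after the correction built into Lemma \ref{lem::dcFtau}.

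Finally I would apply $d$ and use $(d\eta)_\tau = d\eta_\tau$ from \eqref{eq::homDGA} to move the exterior derivative inside the reduction, obtaining
$$ dd^c\psi_\tau = \Bigl( dd^c\phi + \frac12\, d\mu\wedge d^c\log s + \frac{\mu-c}{2}\, dd^c\log s \Bigr)_\tau. $$
Two observations kill the extra terms. Since $\log s$ depends only on the annulus coordinate it is pluriharmonic, so $dd^c\log s = 0$; and since $\mu$ is constant on $S_\tau$ one has $\iota_\tau^* d\mu = 0$, hence $\iota_\tau^*\bigl(d\mu\wedge d^c\log s\bigr) = 0$. Therefore the pullback under $\iota_\tau$ of the bracketed form coincides with $\iota_\tau^*(dd^c\phi)$, which gives $dd^c\psi_\tau = (dd^c\phi)_\tau$ and, combined with the first paragraph, $\omega_\tau = \sigma + dd^c\psi_\tau$.

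The main obstacle, and the point deserving the most care, is that neither $d^c$ nor the correction term of Lemma \ref{lem::dcFtau} commutes with reduction for an arbitrary invariant function: one must genuinely track the non-reducible pieces, namely those with $\iota_\tau^*(i_V\,\cdot)\neq 0$, and check that they assemble into the reducible combinations above \emph{before} restricting to $S_\tau$. The definition of $\psi$ is engineered so that these pieces cancel, and the two vanishing facts $dd^c\log s = 0$ and $\iota_\tau^* d\mu = 0$ are what eliminate the residual terms.
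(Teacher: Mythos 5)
Your proof is correct and takes essentially the same route as the paper's: both apply Lemma \ref{lem::dcFtau} to $\psi$, using $JV(\phi)=\mu-c$ and $JV(\mu)=|V|^2$ to get $d^c\psi_\tau = \bigl( d^c\phi + \tfrac{\mu-c}{2}\, d^c\log s \bigr)_\tau$, then apply $d$ via \eqref{eq::homDGA} and discard the term with $(d\mu \wedge d^c\log s)_\tau = 0$. The only cosmetic difference is that you first reduce $\omega = \pi^*\sigma + dd^c\phi$ and prove $dd^c\psi_\tau = (dd^c\phi)_\tau$, whereas the paper assembles $\sigma + dd^c\psi_\tau = \omega_\tau$ in a single step.
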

\begin{proof}
By Lemma \ref{lem::dcFtau}, recalling that $JV(\phi) = \mu-c$ and $JV(\mu)=|V|^2$ it follows 
%$$ d^c \psi_\tau = \left( d^c\phi +\frac{1}{2}\log s \,d^c\mu + \frac{\mu-c}{2}d^c\log s - \frac{\mu-c}{|V|^2}d^c\mu - \frac{1}{2}\log s \, d^c \mu + \frac{\mu-c}{|V|^2} d^c\mu \right)_\tau$$ 
$$ d^c \psi_\tau = \left( d^c\phi + \frac{\mu-c}{2}d^c\log s \right)_\tau.$$
Thus, since $\omega = \pi^*\sigma + dd^c \phi$, by \eqref{eq::homDGA} one has
\begin{equation}
\sigma + dd^c \psi_\tau = \left( \pi^*\sigma + dd^c\phi + \frac{1}{2} d\mu \wedge d^c\log s \right)_\tau = \omega_\tau,
\end{equation}
where we used $(d\mu \wedge d^c\log s)_\tau = 0$.
\end{proof}

The Monge-Amp\`ere operator of the reduced manifold is given by the reduction of a non-linear operator on $P$. Indeed one has the following (which is a consequence of Lemma 3.3 and Lemma 3.7 in \cite{LT}, but here we produce a different proof):

\begin{lem}\label{lem::MAred}
For all invariant functions $f \in C^\infty(P)$ one has:
\begin{equation}\label{eq::MAred}
\frac{(\omega_\tau + d d^c f_\tau)^n}{\omega_\tau^n} = \left( \frac{\left(\omega + d d^c f - d\frac{JV(f)}{|V|^2}\wedge d^c\mu - \frac{JV(f)}{|V|^2}dd^c\mu \right)^{n+1}}{\omega^{n+1}} \right)_\tau.
\end{equation}
\end{lem}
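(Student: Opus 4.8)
The plan is to reduce the statement to two essentially independent facts: first, that the factor $(\omega_\tau + dd^c f_\tau)^n$ appearing in the numerator on the left is itself the $n$-th power of the reduction of the $(1,1)$-form inside the big parentheses on the right; and second, a pointwise \emph{volume-factorization} identity that expresses the top exterior power of a reducible $(1,1)$-form on $P$ in terms of its reduction together with its value on the pair $V,JV$.

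First I would set $h := JV(f)/|V|^2$ and $\alpha := d^c f - h\, d^c\mu$, so that the form inside the big parentheses is exactly $\Omega := \omega + d\alpha$. By Lemma \ref{lem::dcFtau} the one-form $\alpha$ is reducible with $\alpha_\tau = d^c f_\tau$; hence the compatibility of reduction with $d$ and $\wedge$ in \eqref{eq::homDGA} gives $dd^c f_\tau = (d\alpha)_\tau$ and therefore $\omega_\tau + dd^c f_\tau = \Omega_\tau$. This identifies the left numerator with $\Omega_\tau^n$, so that the whole claim reduces to the identity $(\Omega^{n+1}/\omega^{n+1})_\tau = \Omega_\tau^n/\omega_\tau^n$.

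The decisive computation is $i_V\alpha = 0$. Indeed $i_V d^c f = -JV(f)$ and $i_V d^c\mu = -JV(\mu) = -|V|^2$, so $\alpha(V) = -JV(f) + h\,|V|^2 = 0$ by the very choice of $h$. Since $\alpha$ is moreover invariant, Cartan's formula yields $i_V d\alpha = L_V\alpha - d\, i_V\alpha = 0$, which simultaneously shows that $\Omega$ is reducible and that $\Omega(V,JV) = \omega(V,JV) = |V|^2$; that is, the correction terms leave the value on the pair $(V,JV)$ unchanged. This cancellation is precisely the purpose of those specific correction terms, and I expect it to be the conceptual crux.

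It then remains to establish the pointwise factorization. At a point $p\in S_\tau$ decompose $T_pP = H_p \oplus \mathbb{R}V \oplus \mathbb{R}JV$, where $H_p$ is the $J$-invariant orthogonal complement of $\langle V,JV\rangle$, mapped isomorphically onto $T_{\pi(p)}M_\tau$ by $\pi_\tau$; under this identification the restriction to $H_p$ of any reducible $(1,1)$-form $\Theta$ corresponds to its reduction $\Theta_\tau$. For such a $\Theta$, reducibility forces $\Theta(V,X)=0$ for all $X\in H_p$, so writing $\nu^V,\nu^{JV}$ for the covectors dual to $V,JV$ one has $\Theta = \Theta_H + \nu^{JV}\wedge\beta + \Theta(V,JV)\,\nu^V\wedge\nu^{JV}$ for some $\beta$; in the top power every term containing the cross-piece $\nu^{JV}\wedge\beta$ forces a repeated $\nu^{JV}$ and hence vanishes, leaving $\Theta^{n+1} = (n+1)\,\Theta(V,JV)\,\Theta_H^{\,n}\wedge\nu^V\wedge\nu^{JV}$. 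Applying this to both $\Omega$ and $\omega$ and dividing, the factors $\Theta(V,JV)$ cancel by the previous paragraph, so $\Omega^{n+1}/\omega^{n+1} = \Omega_H^{\,n}/\omega_H^{\,n}$ along $S_\tau$, which is the pullback under $\pi_\tau$ of $\Omega_\tau^n/\omega_\tau^n$; reducing yields the claim. The main obstacle is the bookkeeping in this last factorization, together with the verification that the cross terms genuinely drop out of the top power, while everything else is a direct application of the reduction calculus established above.
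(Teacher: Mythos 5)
Your proof is correct, and its crux is the same as the paper's: the cancellation $i_V\bigl(d^c f - \tfrac{JV(f)}{|V|^2}d^c\mu\bigr)=0$, which gives $i_V\Omega = d\mu$ and $\Omega(V,JV)=\omega(V,JV)=|V|^2$, followed by a factorization of top powers separating horizontal from vertical contributions, and finally restriction to $S_\tau$ (where $d\mu$ pulls back to zero) together with Lemma \ref{lem::dcFtau} to identify the horizontal parts with $\omega_\tau + dd^c f_\tau$ and $\omega_\tau$. The only genuine difference is how the factorization is implemented: the paper stays global on $P$, writing any top form as $\eta\wedge d\log s\wedge d^c\log s$, extracting $\eta$ by the double contraction $i_{JV}i_V$, and obtaining $i_{JV}i_V\xi^{n+1} = (n+1)|V|^2\bigl(\xi - \tfrac{d\mu\wedge i_{JV}\xi}{|V|^2}\bigr)^n$ together with its $f=0$ counterpart before dividing; you instead argue pointwise along $S_\tau$ with the $g$-orthogonal splitting $T_pP = H_p\oplus\mathbf{R}V\oplus\mathbf{R}JV$. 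The two devices are equivalent, but yours never invokes the product structure (no $s$, no $d\log s$), so it applies verbatim to any Hamiltonian holomorphic circle action, while the paper's yields a global identity on $P$ prior to any restriction. One cosmetic slip in your bookkeeping: the term $\Theta_H^n\wedge\nu^{JV}\wedge\beta$ in the expansion of $\Theta^{n+1}$ (one cross-piece, no $\nu^V\wedge\nu^{JV}$ factor) does not contain a repeated $\nu^{JV}$; it vanishes, as does $\Theta_H^{n+1}$, because any horizontal form of degree exceeding $2n=\dim H_p$ is zero.
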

\begin{proof}
Any top form $\theta$ on $P$ can be written as
\begin{equation}\label{eq::topformdec}
\theta = \eta \wedge d \log s \wedge d^c \log s,
\end{equation}
where $\eta$ is a section of $\pi^* K_M$.
Clearly one can recover $\eta$ from $\theta$ by contraction with $V$ and $JV$:
\begin{equation}\label{eq::etabycontraction} i_{JV}i_V \theta 
= -2 i_{JV} \left(\eta \wedge d \log s\right)
= 4 \eta.
\end{equation}
Let 
$$ \xi = \omega + d d^c f - d\frac{JV(f)}{|V|^2}\wedge d^c\mu - \frac{JV(f)}{|V|^2}dd^c\mu. $$
Note that it can be also written as
$$ \xi = \omega + d \left( d^c f - \frac{JV(f)}{|V|^2} d^c \mu \right). $$
We want to write $\xi^{n+1}$ in the form \eqref{eq::topformdec}.
Since 
\begin{equation}\label{eq::contractionxin+1}
i_{JV} i_V \xi^{n+1} = (n+1) i_{JV} \left( i_V \xi \wedge \xi^n \right) = (n+1) i_{JV} i_V \xi \left( \xi - \frac{i_V \xi \wedge i_{JV} \xi}{i_{JV} i_V \xi} \right)^n,
\end{equation}
we need to calculate the contraction of $\xi$ with $V$ and $JV$.
The one form $d^c f - \frac{JV(f)}{|V|^2} d^c \mu$ is circle invariant, thus
$$ i_V \xi = d\mu + L_V \left( d^c f - \frac{JV(f)}{|V|^2} d^c \mu \right) + d i_V \left( d^c f - \frac{JV(f)}{|V|^2} d^c \mu \right) = d\mu, $$
whence it follows readily that $i_{JV} i_V \xi = |V|^2$.
%Moreover one has
%\begin{eqnarray*} 
%i_{JV} \xi &=& d^c \mu + L_{JV} \left( d^c f - \frac{JV(f)}{|V|^2} d^c \mu \right) + d i_{JV} \left( d^c f - \frac{JV(f)}{|V|^2} d^c \mu \right) \\
%&=& d^c \mu + d^c JV(f) - JV\left(\frac{JV(f)}{|V|^2}\right) d^c \mu - \frac{JV(f)}{|V|^2} d^c |V|^2 \\
%&=& d^c \mu + |V|^2 \left( d^c \frac{JV(f)}{|V|^2} - \left(\frac{JV}{|V|^2}\right)(f) d^c \mu \right)
%\end{eqnarray*}
As we will see, we do not need an explicit expression for $i_{JV}\xi$.
Substituting in \eqref{eq::contractionxin+1} gives
\begin{equation}
i_{JV} i_V \xi^{n+1} = (n+1)|V|^2 \left( \omega + d d^c f - d\frac{JV(f)}{|V|^2}\wedge d^c\mu - \frac{JV(f)}{|V|^2}dd^c\mu - \frac{d\mu \wedge i_{JV}\xi}{|V|^2} \right)^n,
\end{equation}
and in particular, taking $f=0$:
$$ i_{JV} i_V \omega^{n+1} = (n+1) |V|^2\left(\omega - \frac{d\mu \wedge d^c \mu}{|V|^2} \right)^n, $$
by \eqref{eq::topformdec} and \eqref{eq::etabycontraction} it follows
\begin{equation}
\frac{\xi^{n+1}}{\omega^{n+1}} = \frac{\left( \omega + d d^c f - d\frac{JV(f)}{|V|^2}\wedge d^c\mu - \frac{JV(f)}{|V|^2}dd^c\mu - \frac{d\mu \wedge i_{JV}\xi}{|V|^2} \right)^n}{\left(\omega - \frac{d\mu \wedge d^c \mu}{|V|^2} \right)^n},
\end{equation}
whence the thesis follows by reducing at $\tau$ and using Lemma \ref{lem::dcFtau}.
\end{proof}
An immediate consequence is the following fact (cf. formula (14) in \cite{LT}):
\begin{cor}\label{cor::Deltared}
For all invariant functions $f \in C^\infty(P)$ the following holds:
\begin{equation}
\Delta_\tau f_\tau = \left( \Delta f - \left( \Delta \mu - JV \log |V| \right)\frac{JV(f)}{|V|^2} - \frac{(JV)^2(f)}{2|V|^2} \right)_\tau,
\end{equation}
where $\Delta$ and $\Delta_\tau$ denote the Laplace operators of the metrics $g$ and $g_\tau$ respectively.
\end{cor}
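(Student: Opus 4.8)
The plan is to obtain the Laplacian as the \emph{linearization} of the Monge--Amp\`ere operator appearing in Lemma \ref{lem::MAred}. Concretely, I would replace $f$ by $tf$ with $t$ a real parameter and differentiate the identity \eqref{eq::MAred} at $t=0$. Since the reduction map $\eta\mapsto\eta_\tau$ is linear and independent of $t$, and since the Monge--Amp\`ere quotients on both sides are circle-invariant functions (ratios of top forms, each proportional to the relevant volume form), the operator $\frac{d}{dt}\big|_{t=0}$ commutes with reduction. This is the structural observation that makes the whole argument go through, so I would record it first, together with the linearity $(tf)_\tau = t f_\tau$.

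For the left-hand side, linearity of reduction gives $\frac{d}{dt}\big|_{t=0}\frac{(\omega_\tau + t\,dd^c f_\tau)^n}{\omega_\tau^n} = \operatorname{tr}_{\omega_\tau}(dd^c f_\tau) = \Delta_\tau f_\tau$, which is exactly the left-hand side of the Corollary. For the right-hand side, all three $f$-dependent terms inside the bracket of \eqref{eq::MAred} are linear in $f$, so, writing $\Theta = dd^c f - d\frac{JV(f)}{|V|^2}\wedge d^c\mu - \frac{JV(f)}{|V|^2}dd^c\mu$, the derivative at $t=0$ of $\big(\frac{(\omega+t\Theta)^{n+1}}{\omega^{n+1}}\big)_\tau$ equals $\big(\operatorname{tr}_\omega\Theta\big)_\tau$, where $\operatorname{tr}_\omega$ denotes the trace on $P$ (of complex dimension $n+1$). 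Thus the identity of the Corollary reduces to the pointwise equality on $P$ of $\operatorname{tr}_\omega\Theta$ with $\Delta f - (\Delta\mu - JV\log|V|)\frac{JV(f)}{|V|^2} - \frac{(JV)^2(f)}{2|V|^2}$, which is an assertion about invariant functions that can then be reduced at $\tau$.

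It then remains to evaluate $\operatorname{tr}_\omega\Theta$ term by term. The first term gives $\operatorname{tr}_\omega(dd^c f) = \Delta f$, and the third gives $\operatorname{tr}_\omega\big(\frac{JV(f)}{|V|^2}dd^c\mu\big) = \frac{JV(f)}{|V|^2}\Delta\mu$, since $\frac{JV(f)}{|V|^2}$ is a scalar factor. For the cross term I would invoke the standard K\"ahler trace identity $\operatorname{tr}_\omega(dh\wedge d^c\mu) = \tfrac12\langle\nabla h,\nabla\mu\rangle$, together with the fact that $d\mu = i_V\omega$ forces $\nabla\mu = JV$, so that $\langle\nabla h,\nabla\mu\rangle = dh(JV) = JV(h)$; applied to $h = \frac{JV(f)}{|V|^2}$ this yields $\tfrac12 JV\big(\frac{JV(f)}{|V|^2}\big)$. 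Expanding by the Leibniz rule and using $JV(|V|^2) = 2|V|^2\,JV(\log|V|)$ (equivalently $JV(\mu)=|V|^2$ from the earlier lemmas) gives $JV\big(\frac{JV(f)}{|V|^2}\big) = \frac{(JV)^2(f)}{|V|^2} - 2\,JV(\log|V|)\frac{JV(f)}{|V|^2}$. Substituting and collecting the four resulting terms then produces exactly the claimed formula.

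The main obstacle is bookkeeping rather than conceptual: one must fix the normalization so that the linearization $\operatorname{tr}_\omega dd^c$ is precisely the Laplace operator $\Delta$ of the paper, and then carry the \emph{compatible} factor $\tfrac12$ in the cross-term identity $\operatorname{tr}_\omega(dh\wedge d^c\mu) = \tfrac12\langle\nabla h,\nabla\mu\rangle$. It is exactly this $\tfrac12$, interacting with the Leibniz expansion above, that simultaneously fixes the coefficient of $JV\log|V|$ and produces the factor $\tfrac12$ in front of $(JV)^2(f)$; getting either of these constants wrong is the likely source of error. I would also make sure to justify once, cleanly, that the Monge--Amp\`ere ratios on both sides are genuinely invariant functions, so that differentiating in $t$ and then reducing at $\tau$ is legitimate and the two operations commute.
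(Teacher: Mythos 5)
Your proposal is correct and takes essentially the same route as the paper: the paper also proves the Corollary by linearizing the Monge--Amp\`ere identity of Lemma \ref{lem::MAred} and computing the trace of the same form $\Theta$, using $\nabla h \cdot \nabla \mu \, \omega^{n+1} = (n+1)\, dh \wedge d^c \mu \wedge \omega^n$ together with $\nabla \mu = JV$ and the Leibniz expansion of $JV\bigl(\frac{JV(f)}{|V|^2}\bigr)$. The only difference is bookkeeping of the factor of $2$: the paper normalizes so that the linearization of the Monge--Amp\`ere operator is $2\Delta$ and the cross-term trace is $\langle \nabla h, \nabla \mu\rangle$ with no $\tfrac12$, whereas you take the linearization to be $\Delta$ and carry the $\tfrac12$ in the cross term; each pair of conventions is internally consistent and both yield exactly the stated formula.
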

\begin{proof}
As well known the linearization of the Monge-Amp\`ere operator, i.e. the left hand side of \eqref{eq::MAred}, is twice the Laplacian. Thus by Lemma \ref{lem::MAred} one has
\begin{equation}\label{eq::Deltapartial}
2 \Delta_\tau f_\tau = \left( \frac{(n+1)\left(d d^c f - d\frac{JV(f)}{|V|^2}\wedge d^c\mu - \frac{JV(f)}{|V|^2}dd^c\mu \right) \wedge \omega^n}{\omega^{n+1}} \right)_\tau.
\end{equation}
Thanks to the identity $\nabla f \cdot \nabla g \, \omega^{n+1} = (n+1) df \wedge d^c g \wedge \omega^n$ and recalling that $\nabla \mu = JV$ one calculates
\begin{multline*}
(n+1)\left(d d^c f - d\frac{JV(f)}{|V|^2}\wedge d^c\mu - \frac{JV(f)}{|V|^2}dd^c\mu \right)\wedge \omega^n \\
= \left( 2\Delta f - JV \left(\frac{JV(f)}{|V|^2}\right) - 2 \frac{JV(f)}{|V|^2} \Delta \mu \right) \omega^{n+1} \\
= \left( 2\Delta f - \frac{JV^2(f)}{|V|^2} + \frac{JV(f)}{|V|^2} JV \log |V|^2 - 2 \frac{JV(f)}{|V|^2} \Delta \mu \right) \omega^{n+1},
\end{multline*}
whence the thesis follows by substitution in \eqref{eq::Deltapartial}.
\end{proof}

The Ricci curvature of the reduced metric turns out to be the reduction of a two form on $M$. More precisely one has the following

\begin{prop}\label{prop::riccired}
The (1,1)-form on $P$ 
%$$ \rho = \Ric(\omega) + dd^c \log |V| - |V|^{-2} d(\Delta \mu - JV \log |V|) \wedge d^c \mu - |V|^{-2} (\Delta \mu - JV \log |V|) (dd^c \mu - d \log |V|^2 \wedge d^c\mu)$$
$$ \rho = \Ric(\omega) + dd^c \log |V| + d\left( \frac{\Delta \mu - JV \log |V|}{ |V|^2} d^c \mu \right) $$
satisfies $\rho_\tau = \Ric(\omega_\tau)$.
\end{prop}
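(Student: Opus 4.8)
The plan is to write each Ricci form as a $dd^c$-derivative of a volume density and then transport the computation from $P$ to $M_\tau$ through the reduction map, the one genuine subtlety being that reduction does \emph{not} commute with $dd^c$. Fix local holomorphic coordinates $z^1,\dots,z^n$ on $M$ together with the annulus coordinate $w$ on $A$, and let $\det g$ denote the determinant of the full Hermitian metric of $g$ in the coordinates $(z^1,\dots,z^n,w)$. With the conventions of the paper one has $d^c=i(\bar\partial-\partial)$, hence $dd^c=2i\partial\bar\partial$, so that $\Ric(\omega)=-\tfrac12 dd^c\log\det g$ on $P$ and likewise $\Ric(\omega_\tau)=-\tfrac12 dd^c\log\det g_\tau$ on $M_\tau$, with $\det g_\tau$ computed in $z^1,\dots,z^n$. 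A change of the $z$-coordinates multiplies $\det g$ by $|J(z)|^2$ with $J$ holomorphic and independent of $s$, so $JV(\log\det g)$ and $dd^c\log\det g$ are well defined and the three terms defining $\rho$ are intrinsic.

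First I would establish the volume relation $\det g_\tau=\bigl(\tfrac{s\,\det g}{|V|^2}\bigr)_\tau$, which comes almost for free from the proof of Lemma~\ref{lem::MAred}: setting $f=0$ there gives $i_{JV}i_V\omega^{n+1}=(n+1)|V|^2\beta^n$ with $\beta=\omega-\tfrac{d\mu\wedge d^c\mu}{|V|^2}$. The form $\beta$ is invariant and satisfies $i_V\beta=0$, so it reduces, and since $\iota_\tau^*d\mu=0$ one has $\beta_\tau=\omega_\tau$, whence $(\beta^n)_\tau=\omega_\tau^n$. Contracting the flat volume $\Omega_P=\Omega_M\wedge\tfrac{i}{2}dw\wedge d\bar w$ yields $i_{JV}i_V\Omega_P=s\,\Omega_M$, so that $i_{JV}i_V\omega^{n+1}=(n+1)!\,s\det g\,\Omega_M$. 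Reducing both sides of the displayed identity and comparing coefficients of the reduced flat volume gives the relation between $\det g_\tau$ and $s\det g/|V|^2$.

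The heart of the matter is the failure of reduction to commute with $dd^c$. Setting $F=\log\tfrac{s\det g}{|V|^2}$, so that $F_\tau=\log\det g_\tau$, I would differentiate Lemma~\ref{lem::dcFtau} and invoke $(d\eta)_\tau=d\eta_\tau$ from \eqref{eq::homDGA} to obtain
$$ dd^c F_\tau = \Bigl( dd^c F - d\bigl(\tfrac{JV(F)}{|V|^2}\,d^c\mu\bigr)\Bigr)_\tau , $$
which is legitimate because $\eta:=d^c F-\tfrac{JV(F)}{|V|^2}d^c\mu$ is invariant with $i_V\eta=0$ (one checks $i_Vd^cF=-JV(F)$ and $i_Vd^c\mu=-|V|^2$). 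This correction is exactly the mechanism producing the last, mysterious-looking term of $\rho$. On the other side, $dd^c\log s=0$ on the annulus and $dd^c\log\det g=-2\Ric(\omega)$, so $-\tfrac12 dd^c F=\Ric(\omega)+dd^c\log|V|$, which accounts for the first two terms of $\rho$.

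Assembling, $\Ric(\omega_\tau)=-\tfrac12 dd^c F_\tau=\bigl(\Ric(\omega)+dd^c\log|V|+\tfrac12 d(\tfrac{JV(F)}{|V|^2}d^c\mu)\bigr)_\tau$, and it remains only to identify $\tfrac12 JV(F)$ with $\Delta\mu-JV\log|V|$. Since $JV(\log s)=-2$ and $JV(\log|V|^2)=2JV\log|V|$, this collapses to the single scalar identity $\Delta\mu=\tfrac12 JV(\log\det g)-1$, which I would prove from $\nabla\mu=JV$: in the paper's normalization $\Delta\mu=\tfrac12\operatorname{div}(JV)$, and computing $L_{JV}$ of the volume form $\det g\,\Omega_P$ gives $\operatorname{div}(JV)=JV(\log\det g)-2$, because $L_{JV}(dw\wedge d\bar w)=-2\,dw\wedge d\bar w$. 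I expect the main obstacle to be precisely this bookkeeping: recognizing that the $d^c\mu$-correction to $dd^c$ under reduction is what forces the third term of $\rho$, and then matching its coefficient through the moment-map Laplacian identity while keeping the convention-dependent factors of $2$ (complex versus Riemannian Laplacian, and $dd^c=2i\partial\bar\partial$) consistent throughout.
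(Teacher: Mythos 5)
Your proof is correct, and the convention-sensitive constants all check out against the paper's normalizations ($dd^c=2i\partial\bar\partial$, the paper's $\Delta$ equal to half the Laplace--Beltrami operator, $JV\log s=-2$, and $2\Delta\mu=JV\log\det g-2$). The core mechanism is the same as in the paper: both arguments write each Ricci form as $-\tfrac12 dd^c$ of a log volume density and both invoke Lemma \ref{lem::dcFtau} together with $(d\eta)_\tau=d\eta_\tau$ to quantify the failure of $dd^c$ to commute with reduction, which is exactly what produces the term $d\bigl(\tfrac{\Delta\mu-JV\log|V|}{|V|^2}\,d^c\mu\bigr)$. The scaffolding, however, is genuinely different. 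The paper never touches coordinate determinants: it starts from $2\Ric(\omega_\tau)-2\Ric(\sigma)=-dd^c\log(\omega_\tau^n/\sigma^n)$, feeds the reduced potential of Proposition \ref{prop::redform} into Lemma \ref{lem::MAred} with $f=-\psi$, and identifies the resulting density by comparison with the auxiliary product metric $\pi^*\sigma+d\log s\wedge d^c\log s$, whose Ricci form is $\pi^*\Ric(\sigma)$, so that the $\Ric(\sigma)$ terms cancel at the end. You bypass $\sigma$, $\psi$, Proposition \ref{prop::redform} and the full strength of Lemma \ref{lem::MAred} entirely: you only need the $f=0$ contraction identity from that lemma's proof to obtain the reduced volume relation $\det g_\tau=\bigl(s\det g/|V|^2\bigr)_\tau$ up to an irrelevant multiplicative constant, and you supply the one missing scalar identity $\Delta\mu=\tfrac12 JV\log\det g-1$ by an elementary divergence computation, which is in substance the same computation as the paper's formula \eqref{JVlogF}. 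What the paper's route buys is a coordinate-free argument that reuses its already-established machinery; what yours buys is economy and transparency, since each of the three terms of $\rho$ is traced to a specific source and the independence of the result from the choice of triple $(\sigma,\phi,c)$ is visible from the start, at the mild cost of working in local coordinates, a cost you correctly neutralize by observing that $\log\det g$ changes by a pluriharmonic, $s$-independent function under holomorphic coordinate changes on $M$.
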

\begin{proof}
In the situation of Lemma \ref{lem::HamAct}, on $M$ one has
\begin{equation}
2 \Ric(\omega_\tau) - 2 \Ric(\sigma) = - dd^c \log \frac{\omega_\tau^n}{\sigma^n}.
\end{equation}
Letting $\psi = \phi - \frac{\mu - c}{2} \log s$, thanks to Proposition \ref{prop::redform} $\omega_\tau = \sigma + dd^c \psi_\tau$ so that
\begin{equation} 2 \Ric(\omega_\tau) - 2 \Ric(\sigma) = dd^c \log \frac{\omega_\tau-dd^c \psi_\tau}{\omega_\tau^n},
\end{equation}
whence by Lemma \ref{lem::MAred} it follows
\begin{equation}\label{diffricci} 2 \Ric(\omega_\tau) - 2 \Ric(\sigma) = dd^c \log \left( \frac{\left(\omega - d d^c \psi + d\frac{JV(\psi)}{|V|^2}\wedge d^c\mu + \frac{JV(\psi)}{|V|^2}dd^c\mu \right)^{n+1}}{\omega^{n+1}} \right)_\tau.
\end{equation}
As one can easily verify $JV(\psi) = \frac{|V|^2}{2}\log s$, and 
$$ dd^c \psi = dd^c \phi + \frac{1}{2} \log s dd^c \mu + \frac{1}{2} d\log s \wedge d^c \mu + \frac{1}{2} d\mu \wedge d^c \log s. $$
Thus by substitution in \eqref{diffricci}, after some easy calculations and recalling that $\omega = \pi^* \sigma + dd^c \phi$, one gets
\begin{equation} 2 \Ric(\omega_\tau) - 2 \Ric(\sigma) = dd^c \log \left( \frac{\left(\pi^* \sigma - \frac{1}{2} d\mu \wedge d^c \log s \right)^{n+1}}{\omega^{n+1}} \right)_\tau.
\end{equation}
Let $F=\left(\pi^* \sigma - \frac{1}{2} d\mu \wedge d^c \log s \right)^{n+1} / \omega^{n+1}$, by Lemma \ref{lem::dcFtau} and recalling that the exterior derivative commute with the reduction map, one has
\begin{equation}\label{diffricci2}
2 \Ric(\omega_\tau) - 2 \Ric(\sigma) = \left( dd^c \log F - d\left(\frac{JV \log F}{|V|^2} d^c \mu\right)\right)_\tau.
\end{equation}
Since $JV(\mu) = |V|^2$, it holds
$$ \left(\pi^* \sigma - \frac{1}{2} d\mu \wedge d^c \log s \right)^{n+1} = \frac{n+1}{4} |V|^2 \pi^* \sigma^n  \wedge d \log s \wedge d^c \log s, $$
whence
\begin{equation}
\log F 
= \log \frac{n+1}{4} + \log |V|^2 - \log \frac{\omega^{n+1}}{\pi^* \sigma^n  \wedge d \log s \wedge d^c \log s}.
\end{equation}
The top form $\pi^* \sigma^n  \wedge d \log s \wedge d^c \log s$ is, up to a positive constant factor, the volume form of the product metric defined by $\pi^*\sigma + d \log s \wedge d^c \log s$ on $P$.
Since $\Ric(\pi^*\sigma + d \log s \wedge d^c \log s) = \pi^* \Ric(\sigma)$, it turns out
\begin{equation}\label{ddclogF}
dd^c \log F = dd^c \log |V|^2 + 2 \Ric(\omega) - 2 \pi^* \Ric(\sigma).
\end{equation}
On the other hand, since $L_{JV}(\pi^* \sigma^n  \wedge d \log s \wedge d^c \log s) = 0$, one calculates
\begin{equation}\label{JVlogF}
JV \log F 
= JV \log |V|^2 - \frac{L_{JV}(\omega^{n+1})}{\omega^{n+1}}
%= JV \log |V|^2 - \frac{(n+1)dd^c\mu\wedge\omega^n}{\omega^{n+1}}
= JV \log |V|^2 - 2\Delta \mu. 
\end{equation}
The thesis follows substituting \eqref{ddclogF}, \eqref{JVlogF} in \eqref{diffricci2} and observing that $(\pi^* \Ric(\sigma))_\tau = \Ric(\sigma)$. 
\end{proof}

The scalar curvature 
$$
\scal(g_\tau) = \frac{n \Ric(\omega_\tau) \wedge \omega_\tau^{n-1}}{\omega_\tau^n}
$$
of the reduced metric $g_\tau$ can be computed by means of data on $M$.
Indeed the following holds: 
\begin{cor}\label{cor::scalred}
The function
$$ R = \scal(g) + 2 \Delta \log |V| + \frac{2}{|V|^2} \left( \Delta \mu - JV \log |V| \right)^2 + \frac{JV}{|V|^2} \left( \Delta \mu - JV \log |V| \right) $$
satisfies $R_\tau = \scal(g_\tau)$.
\end{cor}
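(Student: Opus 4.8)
The plan is to write $\scal(g_\tau)=\tr_{\omega_\tau}\Ric(\omega_\tau)=\tr_{\omega_\tau}\rho_\tau$ using Proposition \ref{prop::riccired}, and to compare this $\omega_\tau$-trace on $M$ with the $\omega$-trace of $\rho$ on $P$, correcting for the single extra complex direction that the reduction removes. The comparison I would isolate as a general trace-reduction formula: for any invariant $(1,1)$-form $\zeta$ on $P$ with $\iota_\tau^*(i_V\zeta)=0$ (so that $\zeta_\tau$ is defined) one has
\[
\tr_{\omega_\tau}\zeta_\tau=\left(\tr_\omega\zeta-\frac{i_{JV}i_V\zeta}{|V|^2}\right)_\tau .
\]
This rests on the $g$-orthogonal, $J$-invariant splitting $TP=\mathcal H\oplus\spn(V,JV)$, where $\mathcal H:=\spn(V,JV)^\perp$. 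Indeed $V$ is tangent to $S_\tau$ (as $V(\mu)=\omega(V,V)=0$) while $JV=\nabla\mu$ is orthogonal to the level set $S_\tau$, so $\mathcal H\subset TS_\tau$ and $d\pi_\tau|_{\mathcal H}\colon\mathcal H\to TM_\tau$ is a complex-linear isometry onto $(M_\tau,g_\tau)$, which is exactly the defining property of the K\"ahler reduction. Evaluating $\tr_\omega\zeta$ on a $g$-orthonormal, $J$-adapted basis, the horizontal summands reduce to $\tr_{\omega_\tau}\zeta_\tau$ while the vertical plane contributes $\zeta(V/|V|,JV/|V|)=i_{JV}i_V\zeta/|V|^2$. (Alternatively the same identity follows from the contraction technique of Lemma \ref{lem::MAred}.)

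Next I would compute $\tr_\omega\rho$ termwise. Writing $h=\Delta\mu-JV\log|V|$, so that $\rho=\Ric(\omega)+dd^c\log|V|+d\!\left(\tfrac{h}{|V|^2}d^c\mu\right)$, and using $\tr_\omega\Ric(\omega)=\scal(g)$, the Monge-Amp\`ere linearization $\tr_\omega dd^c f=2\Delta f$, and $\tr_\omega(df\wedge d^c g)=\nabla f\cdot\nabla g$ together with $\nabla\mu=JV$, one obtains
\[
\tr_\omega\rho=\scal(g)+2\Delta\log|V|+JV\!\left(\tfrac{h}{|V|^2}\right)+\tfrac{2h\,\Delta\mu}{|V|^2}.
\]
Expanding $JV(h/|V|^2)=\tfrac{JV(h)}{|V|^2}-\tfrac{2h\,JV\log|V|}{|V|^2}$ and collecting the two terms proportional to $h$ via $\Delta\mu-JV\log|V|=h$ turns the right-hand side into exactly $R$; that is, $\tr_\omega\rho=R$ as functions on $P$.

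It then remains to show that the vertical correction vanishes, $i_{JV}i_V\rho=0$. Here $V\log|V|=0$ and $[V,JV]=0$ (both because $V$ is holomorphic and Killing) give $i_{JV}i_V\,dd^c\log|V|=(JV)^2\log|V|$, while $i_V d^c\mu=-|V|^2$ and $i_V dd^c\mu=d|V|^2$ give $i_{JV}i_V\,d\!\left(\tfrac{h}{|V|^2}d^c\mu\right)=JV(h)$. For the curvature term the input is the vertical Ricci identity $\Ric(\omega)(V,JV)=-JV(\Delta\mu)$: since $V$ is holomorphic Killing, $i_V\Ric(\omega)$ is closed, and the transgression of the Ricci form along the holomorphic flow of $JV=\nabla\mu$, namely $L_{JV}\Ric(\omega)=-\tfrac12 dd^c(\operatorname{div}_\omega JV)$ with $\operatorname{div}_\omega JV=L_{JV}\omega^{n+1}/\omega^{n+1}=2\Delta\mu$, pins down $i_V\Ric(\omega)=-d(\Delta\mu)$ and hence $i_{JV}i_V\Ric(\omega)=-JV(\Delta\mu)$. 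Adding the three contributions and substituting $JV(h)=JV(\Delta\mu)-(JV)^2\log|V|$ gives a complete cancellation. Combined with the trace-reduction formula and $\tr_\omega\rho=R$, this yields $\scal(g_\tau)=\tr_{\omega_\tau}\rho_\tau=(\tr_\omega\rho)_\tau=R_\tau$.

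I expect the vertical Ricci identity $\Ric(\omega)(V,JV)=-JV(\Delta\mu)$ to be the main obstacle. It requires the holomorphic-Killing-field computation of $i_V\Ric(\omega)$ and, crucially, careful tracking of the normalization constants in $dd^c$, $\Delta$ and the Ricci form — these must be the ones implicit in the convention $2\Ric=-dd^c\log\det g$ already used in Proposition \ref{prop::riccired} — as well as ruling out, by invariance, the a priori closed-form ambiguity in $i_V\Ric(\omega)$. Everything else is the termwise trace bookkeeping already exercised in Corollary \ref{cor::Deltared}.
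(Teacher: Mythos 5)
Your proposal is correct in substance, and it shares the paper's governing idea: the paper's proof opens by declaring that ``the main point is that the trace of $\rho$ with respect to $\omega$ descends to the trace of $\rho_\tau$ with respect to $\omega_\tau$'', and your termwise identification $\tr_\omega\rho=R$ is exactly the paper's closing computation. Where you genuinely diverge is in the mechanism for the descent. The paper stays inside the wedge-product calculus of Lemma \ref{lem::MAred}: it first proves the full one-form identity $i_V\rho=0$, writes $\rho\wedge\omega^n$ and $\omega^{n+1}$ as sections of $\pi^*K_M$ wedged with $d\log s\wedge d^c\log s$ by contracting with $V$ and $JV$, and reduces the ratio at $\tau$; the term $n|V|^{-2}\,i_{JV}\rho\wedge d\mu\wedge\omega^{n-1}$ is never evaluated because $(d\mu)_\tau=0$ kills it upon reduction. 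You instead prove the pointwise trace-reduction identity
$$ \tr_{\omega_\tau}\zeta_\tau=\left(\tr_\omega\zeta-\frac{\zeta(V,JV)}{|V|^2}\right)_\tau $$
from the orthogonal $J$-invariant splitting $TP=\mathcal H\oplus\spn(V,JV)$. This is a cleaner and reusable tool (applied to the form $\xi-\omega$ of Lemma \ref{lem::MAred} it re-derives Corollary \ref{cor::Deltared} at once), and it needs only the scalar vanishing $\rho(V,JV)=0$ rather than $i_V\rho=0$; the price is that you must actually verify that vanishing, which the paper's route never has to confront for the $i_{JV}\rho$ component. Your computations $i_{JV}i_V\,dd^c\log|V|=(JV)^2\log|V|$ and $i_{JV}i_V\,d\bigl(\tfrac{h}{|V|^2}d^c\mu\bigr)=JV(h)$, and the cancellation, all check out under the paper's conventions.

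The one step that would fail as written is your derivation of the vertical Ricci identity $\Ric(\omega)(V,JV)=-JV(\Delta\mu)$, which you rightly flag as the main obstacle. From $L_{JV}\Ric(\omega)=-dd^c\Delta\mu$ and closedness of the Ricci form you only obtain $i_{JV}\Ric(\omega)=-d^c\Delta\mu$ up to a closed one-form, and transferring to $i_V\Ric(\omega)$ by $J$-invariance does not preserve closedness of that ambiguity; nor does invariance rule it out, since invariant candidates survive all your constraints --- for instance pullbacks $\pi^*\gamma$ of real parts of holomorphic one-forms on $M$ (closed, invariant, annihilating $V$, with $\gamma\circ J$ also closed) whenever $b_1(M)\neq 0$. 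The fix is the paper's own argument, equation \eqref{eq::deltamumoment}: since $\Ric(\pi^*\sigma+d\log s\wedge d^c\log s)=\pi^*\Ric(\sigma)$ is annihilated by $i_V$, one may write $i_V\Ric(\omega)=-\tfrac12\,i_V dd^c\log\bigl(\omega^{n+1}/(\pi^*\sigma^n\wedge d\log s\wedge d^c\log s)\bigr)$ with a globally defined invariant potential, and then Cartan's formula together with $L_{JV}\omega^{n+1}=(n+1)\,dd^c\mu\wedge\omega^n$ gives $i_V\Ric(\omega)=-d\Delta\mu$ outright, with no ambiguity to exclude. Substituting this for your transgression argument, your cancellation $-JV(\Delta\mu)+(JV)^2\log|V|+JV(h)=0$ and hence the whole proof go through.
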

\begin{proof}
Here the main point is that the trace of $\rho$ with respect to $\omega$ descends to the trace of $\rho_\tau$ with respect to $\omega_\tau$. The thesis then follows directly from Proposition \ref{prop::riccired}.

First of all notice that by Cartan formula, circle invariance and by the identity $JV(\mu)=|V|^2$ it follows
\begin{equation}
i_V \rho = i_V \Ric(\omega) - di_Vd^c \log |V| - d i_V \left( \frac{\Delta \mu - JV \log |V|}{ |V|^2} d^c \mu \right) = i_V \Ric(\omega) + d \Delta \mu = 0,
\end{equation}
where the last equality is quite standard, but we include the proof for convenience of the reader.
\begin{eqnarray}
\nonumber i_V \Ric(\omega) &=& i_V \left( \Ric(\omega) - \pi^* \Ric(\sigma) \right) \\
\nonumber &=& - \frac{1}{2} i_V d d^c \log \frac{\omega^{n+1}}{\pi^* \sigma^n \wedge d \log s \wedge d^c \log s} \\
%\nonumber &=& \frac{1}{2} d i_V d^c \log \frac{\omega^{n+1}}{\pi^* \sigma^n \wedge d \log s \wedge d^c \log s} \\
\nonumber &=& - \frac{1}{2} d JV \log \frac{\omega^{n+1}}{\pi^* \sigma^n \wedge d \log s \wedge d^c \log s} 
\\
\nonumber &=& - \frac{1}{2} d \frac{L_{JV}(\omega^{n+1})}{\omega^{n+1}} 
\\
\nonumber &=& - \frac{1}{2} d \frac{(n+1) dd^c \mu \wedge \omega^n}{\omega^{n+1}} \\
\label{eq::deltamumoment}&=& - d \Delta \mu.
\end{eqnarray}

Arguing as in the proof of Lemma \ref{lem::MAred}, one has
$$ \rho \wedge \omega^n = \eta \wedge d \log s \wedge d^c \log s,$$
where $4\eta = i_{JV} i_V (\rho \wedge \omega^n)$ is a section of $\pi^* K_M$.
Thus by easy calculations one finds
\begin{equation}\label{rhowomega}
\rho \wedge \omega^n
%= \frac{n}{4} i_{JV} \left( \rho \wedge d\mu \wedge \omega^{n-1} \right) \wedge d \log s \wedge d^c \log s
%= \frac{n}{4} \left( i_{JV}\rho \wedge d\mu \wedge \omega^{n-1} + |V|^2 \rho \wedge \omega^{n-1} - (n-1) \rho \wedge d\mu \wedge d^c \mu \wedge \omega^{n-2} \right)\wedge d \log s \wedge d^c \log s
= \frac{n}{4} \left( i_{JV}\rho \wedge d\mu \wedge \omega^{n-1} + |V|^2 \rho \wedge \left(\omega - \frac{d\mu \wedge d^c \mu}{|V|^2} \right)^{n-1} \right)\wedge d \log s \wedge d^c \log s.
\end{equation}
In the same way one gets
\begin{equation}\label{omegan+1}
\omega^{n+1} = \frac{n+1}{4} |V|^2 \left( \omega - \frac{d\mu \wedge d^c \mu}{|V|^2} \right)^n \wedge d \log s \wedge d^c \log s. 
\end{equation}
Combinig \eqref{rhowomega} and \eqref{omegan+1} then gives
\begin{equation}
\frac{(n+1) \rho \wedge \omega^n}{\omega^{n+1}}
= \frac{n|V|^{-2}i_{JV}\rho \wedge d\mu \wedge \omega^{n-1} + n\rho \wedge \left(\omega - \frac{d\mu \wedge d^c \mu}{|V|^2} \right)^{n-1}}{\left( \omega - \frac{d\mu \wedge d^c \mu}{|V|^2} \right)^n}
\end{equation}
and reducing at $\tau$, recalling that $\rho_\tau = \Ric (\omega_\tau)$, by Proposition \ref{prop::riccired} one then has
\begin{equation}
\left( \frac{(n+1) \rho \wedge \omega^n}{\omega^{n+1}} \right)_\tau
= \frac{n \rho_\tau \wedge \omega_\tau^{n-1}}{\omega_\tau^n}
= \scal(g_\tau).
\end{equation}
The thesis follows from equation above after observing that the trace of $\rho$ with respect to $\omega$ coincides with the function $R$ on the statement. Indeed one has
\begin{eqnarray*}
\frac{(n+1) \rho \wedge \omega^n}{\omega^{n+1}}
&=& \frac{(n+1) \left(\Ric(\omega) + dd^c \log |V| + d\left( \frac{\Delta \mu - JV \log |V|}{ |V|^2} d^c \mu \right)\right)\wedge \omega^n}{\omega^{n+1}} \\
&=& \scal(g) + 2 \Delta \log |V| + \frac{2}{|V|^2}(\Delta \mu - JV \log |V|)\Delta \mu + JV \frac{\Delta \mu - JV \log |V|}{ |V|^2} \\
&=& R.
\end{eqnarray*}
\end{proof}

\section{Geometric flows}

In this section we consider the case when the reduced K\"ahler forms induce some geometrically meaningful path of cohomologous metrics on a compact manifold $M$.
Thanks to Duistermaat-Heckman \cite{DuiHec}, this forces to assume $P$ to be globally a product:
$$ P = M \times \mathbf C^*.$$

By Lemma \ref{lem::HamAct}, choosing a K\"ahler metric $\omega$ on $P$ making Hamiltonian the standard circle action with moment map $\mu$ is the same as choosing a triple $(\sigma,\phi,c)$ constituted by a K\"ahler form $\sigma$ on $M$, a smooth invariant function $\phi$ on $P$, and a real constant $c$ satisfying suitable compatibility conditions with $\omega$ and $\mu$.
Moreover, by Lemma \ref{prop::redform} the reduced metrics are given by 
$$ \omega_\tau = \sigma + dd^c \psi_\tau, $$
where $\psi_\tau$ is the reduction of the invariant function $\phi + \frac{\mu-c}{2} \log s$ on $P$. 
In particular $\psi_\tau$ defines a path in the space of K\"ahler potential
$$ \mathcal H_\sigma = \left\{ \varphi \in C^\infty(M) \, | \, \sigma_\varphi=\sigma + dd^c \varphi >0 \right\}. $$
Note that a fixed path $\omega_\tau$ of K\"ahler metrics can induce many different paths in $\mathcal H_\sigma$. Indeed for any function $h=h(\tau)$, one has 
$$ \omega_\tau = \sigma + dd^c (\psi_\tau + h(\tau)).$$

On the other hand, given the path of reduced metrics $\{\omega_\tau\}$ one can ask if a path $\{ \tilde \omega_t\}$ obtained just by reparametrization $\tau=\tau(t)$ of the time comes as path of reduced metrics on $P$. This is the case, and it holds the following

\begin{prop}\label{prop::reptau}
Let $f : \mu(P) \to \mathbf R$ be any smooth function invertible on its image. The path of metrics $\{\omega_{f^{-1}(t)}\}$ on $M$ can be realized as path induced by reduction of the metric $\tilde g$ associated to the K\"ahler form
$$ \tilde \omega = \omega + dd^c \Psi, $$
where $\Psi$ is a smooth invariant function on $P$ satisfying
$$ JV(\Psi) = f(\mu)-\mu. $$
\end{prop}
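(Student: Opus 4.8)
The plan is to exhibit $\tilde g$ explicitly and then read its moment map and level sets off Lemma \ref{lem::HamAct}. Since $\omega = \pi^*\sigma + dd^c\phi$, any solution $\Psi$ of $JV(\Psi) = f(\mu) - \mu$ gives $\tilde\omega = \pi^*\sigma + dd^c(\phi + \Psi)$, so the triple $(\sigma, \phi + \Psi, c)$ satisfies \eqref{omega}. By \eqref{mu} the standard circle action is again Hamiltonian, now with moment map
\[
\tilde\mu = JV(\phi + \Psi) + c = (\mu - c) + (f(\mu) - \mu) + c = f(\mu),
\]
using $JV(\phi) = \mu - c$ and the equation defining $\Psi$. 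Thus the modification by $dd^c\Psi$ has the single effect of postcomposing the moment map with $f$.

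Before proceeding I would check that $\tilde\omega$ is genuinely positive. The fibre size of $\tilde g$ is $JV(\tilde\mu) = f'(\mu)\,JV(\mu) = f'(\mu)|V|^2$, so positivity in the fibre direction forces $f' > 0$; this is compatible with $f$ being invertible on its image once one restricts to the increasing branch. Granting $f' > 0$, the term $dd^c\Psi$ affects only the fibre and mixed components, leaving the horizontal (reduced) part a positive perturbation of that of $\omega$, whence $\tilde\omega > 0$.

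The decisive observation is that the reparametrization is invisible at the level of the level sets: as $f$ is invertible,
\[
\tilde\mu^{-1}(t) = \mu^{-1}(f^{-1}(t)) = S_{f^{-1}(t)},
\]
so the level set of $\tilde\mu$ at height $t$ is exactly $S_{f^{-1}(t)}$, with the same inclusion $\iota_{f^{-1}(t)}$ and quotient $\pi_{f^{-1}(t)}$. Hence the reduced base and its complex structure are unchanged, $M_t = M_{f^{-1}(t)} = M$, and only the reduced K\"ahler forms remain to be compared.

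For that final comparison I would use Proposition \ref{prop::redform}. The reduced potential of $\tilde\omega$ is the reduction of $(\phi + \Psi) + \frac{\tilde\mu - c}{2}\log s$ at $\tilde\mu = t$, while that of $\omega$ is the reduction of $\phi + \frac{\mu - c}{2}\log s$ at $\mu = f^{-1}(t)$; both are evaluated on the one common level set $S_{f^{-1}(t)}$, so their difference is
\[
\Psi_{f^{-1}(t)} + \tfrac{1}{2}\left(t - f^{-1}(t)\right)\log s_{f^{-1}(t)}.
\]
The heart of the argument is to show that this difference is a function of $t$ alone: then it contributes no $dd^c$, and by the gauge freedom $h(\tau)$ in the K\"ahler potential noted before the statement the two reduced forms coincide as metrics on $M$, giving $\tilde\omega_t = \omega_{f^{-1}(t)}$. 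This is the step I expect to be the main obstacle, since it is precisely here that one must use the fibrewise normalization of the solution $\Psi$ of $JV(\Psi) = f(\mu) - \mu$ together with the identities of Lemmas \ref{lem::dertau} and \ref{lem::dcFtau} relating $\partial_\tau$, $d^c$ and the reduction map; by comparison the positivity check of the second paragraph is a routine technical point.
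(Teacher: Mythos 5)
Your first paragraph is, in substance, the paper's entire proof: the paper computes $i_V\tilde\omega = d(\mu + JV(\Psi))$, reads off $\tilde\mu = f(\mu)$, and declares the thesis to follow. Your real contribution is the last paragraph, where you correctly isolate what that argument leaves unproven: that
\[
v_\tau := \Psi_\tau + \tfrac{1}{2}\bigl(f(\tau)-\tau\bigr)(\log s)_\tau , \qquad \tau = f^{-1}(t),
\]
is constant on $M$ for each $\tau$. You flag this as ``the main obstacle'' and leave it open. That is a genuine gap, and moreover the step does not merely resist proof --- it fails. First, it cannot hold for an arbitrary admissible $\Psi$: the solutions of $JV(\Psi)=f(\mu)-\mu$ are exactly $\Psi + \pi^*u$ with $u\in C^\infty(M)$ (any two solutions differ by an invariant function killed by $JV$, hence a pullback from $M$), and this replacement changes $v_\tau$ by $u$ and hence every reduced form by $dd^c u$; so two admissible choices cannot both reduce to $\{\omega_{f^{-1}(t)}\}$. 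Second, in general no choice works: if $v_\tau \equiv c(\tau)$, then evaluating at an arbitrary $(x,w)\in P$ and using $\Psi_{\mu(x,w)}(x)=\Psi(x,w)$, $s_{\mu(x,w)}(x)=s(x,w)$ gives the pointwise identity $\Psi = -\frac{f(\mu)-\mu}{2}\log s + c(\mu)$ on $P$; applying $JV$ (with $JV(\mu)=|V|^2$, $JV(\log s)=-2$) and imposing $JV(\Psi)=f(\mu)-\mu$ forces $2c'(\mu) = (f'(\mu)-1)\log s$, hence $f'(\tau)=1$ at every $\tau$ where $(\log s)_\tau$ is nonconstant. Since $\frac{\partial \omega_\tau}{\partial\tau} = \tfrac12 dd^c(\log s)_\tau$, on a compact $M$ this means exact equality $\tilde\omega_t=\omega_{f^{-1}(t)}$ is achievable only where the reduced path is stationary or $f$ is a translation (for $f(\mu)=\mu+a$ one takes $\Psi=-\tfrac{a}{2}\log s$, which has $dd^c\Psi=0$).

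What is actually true follows from identities the paper itself establishes (Lemma \ref{lem::dcFtau}, plus the two reduction facts used in the proof of Theorem \ref{thm::nKRflow}: $d\bigl(d^c\Psi - \frac{JV(\Psi)}{|V|^2}d^c\mu\bigr)$ reduces to $dd^c\Psi_\tau$, and $d\bigl(\frac{G(\mu)}{|V|^2}d^c\mu\bigr)$ reduces to $G(\tau)\,\frac{\partial\omega_\tau}{\partial\tau}$), namely
\[
\tilde\omega_t = \omega_\tau + dd^c\Psi_\tau + \bigl(f(\tau)-\tau\bigr)\frac{\partial\omega_\tau}{\partial\tau}, \qquad \tau=f^{-1}(t),
\]
which agrees with your potential computation. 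So your reduction of the problem is correct, but the honest conclusion is that the statement itself needs reformulation (a specified choice of $\Psi$, equality only up to $dd^c$-exact terms, or $f$ affine); the paper's one-line proof does not see this because it implicitly assumes $dd^c\Psi$ reduces to zero. A separate, smaller point: your positivity argument is also not right as stated, since $dd^c\Psi$ does perturb the horizontal components (again visible from the $\pi^*u$ freedom), so positivity of $\tilde\omega$ is an additional hypothesis rather than a consequence of $f'>0$.
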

\begin{proof}
Since $i_V \tilde \omega = d (\mu + JV(\Psi))$, a moment map of the metrics $\tilde g$ is given by $$ \tilde \mu = \mu + JV(\Psi) = f(\mu), $$
whence the thesis follows.
\end{proof}

\subsection{Geodesics}

As shown by Mabuchi \cite{Mab}, letting
$$ \|v\|^2 = \int_M v^2 \frac{\sigma_\varphi^n}{n!}, \qquad v \in T_\varphi\mathcal H_\sigma $$
defines a Riemannian metric on the space of K\"ahler potentials $\mathcal H_\sigma$.
It is not difficult to see that a path $\left\{\varphi_t\right\} \subset \mathcal H_\sigma$ is a geodesic if it satisfies the equation
\begin{equation}\label{eq::geodesiceq}
\varphi_t'' - \frac{1}{2}|\nabla \varphi_t'|^2 = 0,
\end{equation}
where the prime denotes the derivative with respect to the time parameter $t$, and gradient and norm are taken with respect the variable metric $\sigma_{\varphi_t}$.

\begin{thm}\label{thm::Geodesics}
Suppose that for some smooth function $h = h(\mu)$ on $P$, the metric $g$ satisfies
\begin{equation}\label{eq::Geodesics}
\Delta s = h s.
\end{equation}
Then the family of reduced metrics $g_\tau$ induces a geodesic in $\mathcal H_\sigma$.
\end{thm}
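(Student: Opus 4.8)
The plan is to exhibit an explicit path of potentials whose induced metrics are exactly the $\omega_\tau$, and to reduce the geodesic equation \eqref{eq::geodesiceq} to the static equation \eqref{eq::Geodesics}. First I would identify the velocity of the path. By Proposition \ref{prop::redform} the reduced metrics are $\omega_\tau = \sigma + dd^c\psi_\tau$, where $\psi_\tau$ is the reduction of $\psi = \phi + \frac{\mu-c}{2}\log s$. Using $JV(\phi)=\mu-c$, $JV(\mu)=|V|^2$ and $JV(\log s)=-2$ one computes $JV(\psi)=\frac{|V|^2}{2}\log s$, so Lemma \ref{lem::dertau} gives the velocity
\[ \frac{\partial \psi_\tau}{\partial \tau} = \left( \frac{JV(\psi)}{|V|^2} \right)_\tau = \tfrac{1}{2}\log s_\tau . \]
Thus $\{\psi_\tau\}$ is the natural candidate, and the task is to check that, after normalising by a function of $\tau$, it solves \eqref{eq::geodesiceq}.

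Second, I would rewrite the hypothesis geometrically. Since $s=w\bar w$ depends only on the $\mathbf C^*$-factor, $\log s$ is pluriharmonic on $P$, i.e. $dd^c\log s=0$; hence $\Delta\log s=0$ and $\Delta s = s\,|\nabla\log s|^2$. Therefore \eqref{eq::Geodesics} is \emph{equivalent} to the statement that $|\nabla\log s|^2=h(\mu)$ is basic, namely the pull-back of a function of $\mu$ alone.

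Third, and this is the heart of the matter, I would compute the geodesic curvature of the path. Differentiating the velocity once more via Lemma \ref{lem::dertau} (equivalently \eqref{eq::dertaustau}) gives $\partial^2_\tau\psi_\tau = -(|V|^{-2})_\tau$. For the gradient term, Lemma \ref{lem::dcFtau} shows that $d^c\log s_\tau$ is the reduction of $d^c\log s+\frac{2}{|V|^2}d^c\mu$, a form one checks to be invariant and horizontal (it annihilates both $V$ and $JV$); reducing its norm yields $|\nabla_\tau\log s_\tau|^2=\big(|\nabla\log s|^2-4|V|^{-2}\big)_\tau$. Assembling these, the geodesic curvature
\[ \partial^2_\tau\psi_\tau-\tfrac12\Big|\nabla_\tau\tfrac{\partial \psi_\tau}{\partial \tau}\Big|^2 \]
should organise, thanks to $dd^c\log s=0$, into a constant multiple of $\big(\tfrac{\Delta s}{s}\big)_\tau$; the hypothesis \eqref{eq::Geodesics} then turns it into the corresponding multiple of $h(\tau)$, a function of $\tau$ alone. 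Verifying that the several $|V|^{-2}$-contributions combine precisely into $\Delta s/s$ is the step I expect to be the main obstacle, and it is exactly where pluriharmonicity of $\log s$ and equation \eqref{eq::Geodesics} are used decisively.

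Finally, once the geodesic curvature is known to be a function $c(\tau)$ of $\tau$ only, I would choose $\kappa=\kappa(\tau)$ with $\kappa''=-c$ and set $\varphi_\tau=\psi_\tau+\kappa(\tau)$ (prime denoting $\partial_\tau$). Since $\kappa$ is constant along $M$ one has $\nabla_\tau\varphi_\tau'=\nabla_\tau\psi_\tau'$ and $\varphi_\tau''=\psi_\tau''+\kappa''$, so that $\varphi_\tau$ satisfies \eqref{eq::geodesiceq} identically; as $dd^c\kappa(\tau)=0$ the associated metrics are unchanged, $\sigma+dd^c\varphi_\tau=\omega_\tau$. Hence the family $g_\tau$ induces the geodesic $\{\varphi_\tau\}$ in $\mathcal H_\sigma$.
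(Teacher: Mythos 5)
Your plan follows the same skeleton as the paper's proof: take the path $\psi_\tau$ from Proposition \ref{prop::redform}, compute velocity and acceleration from Lemma \ref{lem::dertau}, compute the gradient term, and absorb a function of $\tau$ into a normalization $\kappa(\tau)$. Your intermediate formulas are all correct, and your route to the gradient term is genuinely different from the paper's and is attractive: you observe that $d^c\log s+\frac{2}{|V|^2}d^c\mu$ is invariant and horizontal, so that reduction preserves its norm, giving
\[
|\nabla_\tau\log s_\tau|^2=\bigl(|\nabla\log s|^2-4|V|^{-2}\bigr)_\tau ,
\]
whereas the paper extracts the same information from the identity $\Delta e^\varphi=e^\varphi\bigl(\Delta\varphi+\tfrac12|\nabla\varphi|^2\bigr)$ together with Corollary \ref{cor::Deltared} applied to $s$ and $\log s$; the two computations agree.

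The genuine gap is exactly the step you postpone as the ``main obstacle'': with the quantities you have established, the $|V|^{-2}$ contributions do \emph{not} organise into a multiple of $(\Delta s/s)_\tau$. Assembling your own formulas $\psi_\tau'=\tfrac12(\log s)_\tau$, $\psi_\tau''=-\bigl(|V|^{-2}\bigr)_\tau$ with the displayed norm identity and with \eqref{eq::geodesiceq} as printed gives
\[
\psi_\tau''-\tfrac12\bigl|\nabla_\tau\psi_\tau'\bigr|^2
=-\bigl(|V|^{-2}\bigr)_\tau-\tfrac18\bigl(|\nabla\log s|^2-4|V|^{-2}\bigr)_\tau
=-\tfrac18\bigl(|\nabla\log s|^2\bigr)_\tau-\tfrac12\bigl(|V|^{-2}\bigr)_\tau ,
\]
and while the hypothesis $\Delta s=hs$ controls the first summand (via $\Delta\log s=0$, so $|\nabla\log s|^2$ is a constant multiple of $\Delta s/s$), the leftover term $-\tfrac12\bigl(|V|^{-2}\bigr)_\tau$ survives and is not a function of $\tau$ in general; so no choice of $\kappa$ makes the path a geodesic, and what your setup actually proves is the theorem with \eqref{eq::Geodesics} replaced by $\Delta s+\frac{2s}{|V|^2}=hs$. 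The reason the paper's own proof closes is that it takes $\psi_\tau'=(\log s)_\tau$, i.e.\ twice your value, so that $\psi_\tau''=-\bigl(2|V|^{-2}\bigr)_\tau$ exactly eats the extra term; but that velocity contradicts Lemma \ref{lem::dertau} combined with the identity $2JV(\psi)=|V|^2\log s$, which the paper itself states and uses (yielding $\psi_\tau'=\tfrac12(\log s)_\tau$) in the proofs of Theorems \ref{thm::Calabiflow} and \ref{thm::KRflow}. In other words, there is a factor-of-two inconsistency in the source which your (correct) velocity exposes: with $\psi_\tau'=\tfrac12(\log s)_\tau$ the stated conclusion holds only if the geodesic equation is normalized as $\varphi''-|\nabla\varphi'|^2=0$ --- which is indeed the Euler--Lagrange equation of Mabuchi's energy under the conventions in which Corollary \ref{cor::Deltared} and the identity $\nabla f\cdot\nabla g\,\omega^{n+1}=(n+1)\,df\wedge d^cg\wedge\omega^n$ are valid; then your data assemble to $-\tfrac14\bigl(|\nabla\log s|^2\bigr)_\tau$ and everything works. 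As written, however, your proposal commits simultaneously to \eqref{eq::geodesiceq} with the $\tfrac12$ and to the velocity $\tfrac12(\log s)_\tau$, and these two choices are jointly incompatible with the conclusion; to close the argument you must carry out the assembly and resolve this normalization, rather than assume the cancellation happens.
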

\begin{proof}
By Proposition \ref{prop::redform} the reduced K\"ahler form is given by
$$ \omega_\tau = \sigma + dd^c \psi_\tau, $$
where $ \psi = \phi + \frac{\mu-c}{2} \log s $.
Given a smooth function $\kappa(\tau)$, the path $\psi_\tau - \kappa(\tau)$ defines a geodesic in $\mathcal H_\sigma$ if and only if 
\begin{equation}
\psi_\tau'' - \frac{1}{2} \left| \nabla_\tau \psi_\tau' \right|_\tau^2 = \kappa''(\tau),
\end{equation}
where the prime denotes the derivative with respect to $\tau$.
Using the identity $\Delta e^\varphi = e^\varphi \left( \Delta \varphi + \frac{1}{2} |\nabla \varphi|^2 \right)$, one readily verifies that the equation above is equivalent to:
\begin{equation}\label{eq::equivgeod}
\psi_\tau'' - e^{-\psi_\tau'} \Delta_\tau e^{\psi_\tau'} + \Delta_\tau \psi_\tau' = \kappa''(\tau).
\end{equation}
Thanks to Lemma \ref{lem::dertau} we have:
\begin{equation}\label{eq::derivphitau}
\psi_\tau' =% \left(\frac{JV(\phi + \mu \log s)}{|V|^2}\right) = 
(\log s)_\tau, \qquad \psi_\tau'' =% \left(\frac{JV(\log s)}{|V|^2}\right)_\tau = 
\left(-\frac{2}{|V|^2}\right)_\tau,
\end{equation}
whence, by means of Corollary \ref{cor::Deltared}, one can infer that:
\begin{equation}
\Delta_\tau e^{\psi_\tau'} =%\left( \Delta s - \left( \Delta \mu - JV \log |V| \right) \frac{JV(s)}{|V|^2} - \frac{JV^2(s)}{2|V|^2} \right)_\tau =
\left( \Delta s + \left( \Delta \mu - JV \log |V| \right) \frac{2s}{|V|^2} - \frac{2s}{|V|^2} \right)_\tau
\end{equation}
and
\begin{equation}\label{eq::deltaphi'}
\Delta_\tau \psi_\tau' =% \left( \Delta \log s - \left( \Delta \mu - JV \log |V| \right) \frac{JV \log s}{|V|^2} - \frac{JV^2\log s}{2|V|^2} \right)_\tau =
\left( \Delta \log s + \left( \Delta \mu - JV \log |V| \right) \frac{2}{|V|^2} \right)_\tau.
\end{equation}
Substituting in \eqref{eq::equivgeod}, recalling that $\Delta \log s = 0$, then gives
\begin{equation}
\left( - \frac{\Delta s}{s} \right)_\tau = \kappa''(\tau),
\end{equation}
which is equation \eqref{eq::Geodesics} after choosing $h(\tau) = - \kappa''(\tau)$. 
\end{proof}

\begin{rem}
It is clear from the proof of the result above that $h$ appears in connection to the fact that reduced potentials $\psi_\tau$ may define a geodesic only up to normalization.
\end{rem}

\subsection{Calabi flow and its variants}

\begin{thm}\label{thm::Calabiflow}
Suppose that for some smooth function $h = h (\mu)$ on $P$, the metric $g$ satisfies
\begin{equation}\label{eq::Calabiflow}
R = \log s + h,
\end{equation}
where 
\begin{equation}
R = \scal(g) + 2 \Delta \log |V| + \frac{2}{|V|^2} \left( \Delta \mu - JV \log |V| \right)^2 + \frac{JV}{|V|^2} \left( \Delta \mu - JV \log |V| \right).  
\end{equation}
Then the family of reduced metrics $g_\tau$ is a solution of the Calabi flow on $M$:
\begin{equation}\label{eq::Calabiflowforms}
\frac{\partial \omega_\tau}{\partial \tau} = \frac{1}{2} dd^c \scal(g_\tau).
\end{equation}
\end{thm}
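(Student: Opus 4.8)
The plan is to turn the Calabi flow \eqref{eq::Calabiflowforms} into a single scalar identity and then read it off from the two structural results already available, namely Lemma \ref{lem::dertau} and Corollary \ref{cor::scalred}. First I would use Proposition \ref{prop::redform}, which gives $\omega_\tau = \sigma + dd^c\psi_\tau$ with $\psi = \phi + \frac{\mu-c}{2}\log s$. As $\sigma$ is fixed data and does not depend on $\tau$, differentiation commutes with $dd^c$ and yields
\[ \frac{\partial\omega_\tau}{\partial\tau} = dd^c \psi_\tau', \]
where the prime is $\partial/\partial\tau$. Hence \eqref{eq::Calabiflowforms} is equivalent to $dd^c\big(\psi_\tau' - \tfrac12\scal(g_\tau)\big)=0$, that is, to the requirement that $\psi_\tau' - \tfrac12\scal(g_\tau)$ be constant on each reduced fibre, i.e.\ a function of $\tau$ alone.

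Next I would evaluate the two terms. For the first, Lemma \ref{lem::dertau} needs $JV(\psi)$: using $JV(\phi)=\mu-c$, $JV(\mu)=|V|^2$ and $JV(\log s)=-2$ (the last from \eqref{eq::JV}), a one-line product-rule computation gives $JV(\psi)=\tfrac{1}{2}|V|^2\log s$, so that
\[ \psi_\tau' = \left(\frac{JV(\psi)}{|V|^2}\right)_\tau = \frac12(\log s)_\tau. \]
For the second term I would simply invoke Corollary \ref{cor::scalred}, which identifies the function $R$ of the statement with the reduced scalar curvature: $\scal(g_\tau)=R_\tau$.

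Finally I would insert the hypothesis. Because $h=h(\mu)$ and $\mu_\tau=\tau$, reducing $R=\log s + h$ gives $R_\tau = (\log s)_\tau + h(\tau)$ with $h(\tau)$ constant on $M_\tau$; therefore
\[ \psi_\tau' - \tfrac12\scal(g_\tau) = \tfrac12(\log s)_\tau - \tfrac12 R_\tau = -\tfrac12 h(\tau), \]
which depends on $\tau$ only. Applying $dd^c$ annihilates it and recovers \eqref{eq::Calabiflowforms}.

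Since all the analytic weight sits in the preparatory results, I expect no real obstacle here beyond careful bookkeeping of the additive term $h(\mu)$. As in the Remark after Theorem \ref{thm::Geodesics}, a path $\omega_\tau$ of reduced metrics fixes its potential only up to a $\tau$-dependent constant, and $h$ is exactly the slack that this freedom provides; the point to verify is that the factor $\tfrac12$ produced in $\psi_\tau'$ matches the $\tfrac12$ in front of $dd^c\scal(g_\tau)$ in the Calabi flow, which is what makes the two sides agree.
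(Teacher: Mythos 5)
Your proposal is correct and follows essentially the same route as the paper's own proof: Proposition \ref{prop::redform} to get the potential $\psi_\tau$, Lemma \ref{lem::dertau} to compute $\psi_\tau'=\tfrac12(\log s)_\tau$, Corollary \ref{cor::scalred} to identify $\scal(g_\tau)=R_\tau$, and the observation that $h_\tau=h(\tau)$ is constant on $M$ so that $dd^c$ kills it. Your explicit product-rule computation giving $JV(\psi)=\tfrac12|V|^2\log s$ is the right one (the paper's proof states this identity with a typo, writing $2JV(\psi)=\log s$), and the rest matches the paper step for step.
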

\begin{proof}
By Proposition \ref{prop::redform} the reduced K\"ahler form is given by
$$ \omega_\tau = \sigma + dd^c \psi_\tau, $$
where $ \psi = \phi + \frac{\mu-c}{2} \log s $.
Equation \eqref{eq::Calabiflowforms} is then clearly equivalent to 
\begin{equation}\label{Calfloweqeq}
dd^c \left( \frac{\partial \psi_\tau}{\partial \tau} - \frac{1}{2} \scal(g_\tau) \right) = 0. 
\end{equation}
By Corollary \ref{cor::scalred} one has $\scal(g_\tau) = R_\tau$.
On the other hand, by the identity $2JV(\psi)=\log s$ and Lemma \ref{lem::dertau} it follows $\frac{\partial \psi_\tau}{\partial \tau} = \frac{1}{2}\left(\log s\right)_\tau$.
Thus \eqref{Calfloweqeq} turns out to be equivalent to
\begin{equation}\label{eq::CalabiFlowEquiv}
dd^c \left( \log s - R \right)_\tau = 0,
\end{equation}
and this equation is satisfied whenever \eqref{eq::Calabiflow} holds, since $h_\tau = h(\tau)$ is constant on $M$.
\end{proof}

\begin{rem}
The appearance of the function $h$ in the result above is a consequence of the fact that the reduced potentials $\psi_\tau$ are defined up to a constant on $M$ depending on $\tau$.
On the other hand, given a K\"ahler form $\omega$ on $P$, there is a unique function $h$ candidate to solve equation \eqref{eq::Calabiflow}. Indeed, after reducing at $\tau$ that equation, integrating over $M$ and dividing by the volume of $\omega_\tau$ (which is independent of $\tau$) gives
\begin{equation}
h(\tau) = \lambda - \frac{\int_M \log s_\tau \omega_\tau^n}{\int_M \omega_\tau^n},
\end{equation}
where $\lambda = \frac{n \, c_1(M) \cup [\sigma]^{n-1}}{[\sigma]^n}$ is the mean scalar curvature of the K\"ahler class $[\sigma]$.
\end{rem}

\begin{rem}
Equation \eqref{eq::CalabiFlowEquiv} suggests there exist K\"ahler metrics on $P$ for which condition \eqref{eq::Calabiflow} may fail, but still the reduced metrics induce the Calabi flow on the reduction. Clearly in this case the reduced manifold is non-compact.  
\end{rem}

The pseudo-Calabi flow has been introduced and studied by Chen and Zheng \cite{CheZhe}.
\begin{thm}\label{thm::pseudoCalabiflow}
Suppose that $g$ satisfies the following equation
\begin{equation}\label{eq::pseudoCalabiflow}
R + \frac{2}{|V|^2} \left( \Delta \mu - JV \log |V| \right) = \lambda,
\end{equation}
where 
\begin{equation}
R = \scal(g) + 2 \Delta \log |V| + \frac{2}{|V|^2} \left( \Delta \mu - JV \log |V| \right)^2 + \frac{JV}{|V|^2} \left( \Delta \mu - JV \log |V| \right),
\end{equation}
and $\lambda = \frac{n \, c_1(M) \cup [\sigma]^{n-1}}{[\sigma]^n}$. 
Then the family of reduced metrics $g_\tau$ is a solution of the pseudo-Calabi flow on $M$:
\begin{equation}\label{eq::pseudoCf}
\left\{
\begin{array}{l}
\omega_\tau = \sigma + dd^c \psi_\tau \\
\Delta_\tau \psi_\tau' + \scal(g_\tau) = \lambda.
\end{array}
\right.
\end{equation}
where the prime denotes the partial derivative with respect to $\tau$. 
\end{thm}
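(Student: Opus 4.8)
The plan is to mirror the proof of the Calabi flow (Theorem \ref{thm::Calabiflow}), reducing the defining system \eqref{eq::pseudoCf} to a single scalar identity on $P$ which coincides with the hypothesis \eqref{eq::pseudoCalabiflow}. The first equation of \eqref{eq::pseudoCf} is immediate: by Proposition \ref{prop::redform} one has $\omega_\tau = \sigma + dd^c\psi_\tau$ with $\psi = \phi + \frac{\mu-c}{2}\log s$, so the whole content of the theorem is the second equation $\Delta_\tau\psi_\tau' + \scal(g_\tau) = \lambda$, where the prime denotes $\partial/\partial\tau$.

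The idea is then to express both summands on the left as reductions of functions on $P$. The scalar curvature is handled at once by Corollary \ref{cor::scalred}, which gives $\scal(g_\tau) = R_\tau$. For the Laplacian term I would reuse the computation already performed in the proof of Theorem \ref{thm::Geodesics}, since the invariant function $\psi$ is literally the same one: by Lemma \ref{lem::dertau} one has $\psi_\tau' = (\log s)_\tau$ as in \eqref{eq::derivphitau}, and Corollary \ref{cor::Deltared} applied to $f = \log s$ --- using $\Delta\log s = 0$, $JV(\log s) = -2$ and $(JV)^2(\log s) = 0$ --- produces, exactly as in \eqref{eq::deltaphi'},
$$ \Delta_\tau\psi_\tau' = \left( \frac{2}{|V|^2}\left( \Delta\mu - JV\log|V| \right) \right)_\tau. $$
Adding this to $\scal(g_\tau) = R_\tau$ and invoking the linearity of the reduction map, the left-hand side of the second equation becomes the reduction of the single function $R + \frac{2}{|V|^2}(\Delta\mu - JV\log|V|)$ that appears in \eqref{eq::pseudoCalabiflow}. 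By hypothesis this function equals the constant $\lambda$, and since constants reduce to themselves we get $\Delta_\tau\psi_\tau' + \scal(g_\tau) = \lambda$, which is precisely the pseudo-Calabi flow.

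The point I would check most carefully is not the algebra --- which is already contained in the geodesic computation --- but the consistency of the constant $\lambda$, and this is where the value $\lambda = \frac{n\,c_1(M)\cup[\sigma]^{n-1}}{[\sigma]^n}$ becomes forced. Integrating the second equation against $\omega_\tau^n$ over the compact manifold $M$ and using $\int_M\Delta_\tau\psi_\tau'\,\omega_\tau^n = 0$ shows that $\lambda$ must be the average of $\scal(g_\tau)$; by Chern-Weil this average equals the stated topological quantity, independently of $\tau$. This also explains the contrast with Theorems \ref{thm::Geodesics} and \ref{thm::Calabiflow}: there the reduced potential is determined only up to a $\tau$-dependent additive constant, so a free function $h(\tau)$ survives, whereas here fixing $\lambda$ from the outset pins the right-hand side to a single value and removes any such ambiguity.
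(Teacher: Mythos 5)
Your proposal is correct and follows essentially the same route as the paper: reduce the system via Proposition \ref{prop::redform}, compute $\Delta_\tau\psi_\tau'$ exactly as in \eqref{eq::deltaphi'} (i.e.\ via Lemma \ref{lem::dertau} and Corollary \ref{cor::Deltared} with $\Delta\log s=0$), and conclude with Corollary \ref{cor::scalred} that the left-hand side is the reduction of $R+\frac{2}{|V|^2}\left(\Delta\mu-JV\log|V|\right)$, which equals $\lambda$ by hypothesis. Your closing observation that integration over $M$ forces $\lambda$ to be the mean scalar curvature is a sound consistency check (echoed in the paper's remarks on the absence of the normalizing function $h$), but it is not needed for the proof itself.
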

\begin{proof}
By Proposition \ref{prop::redform} the reduced K\"ahler form is given by
$$ \omega_\tau = \sigma + dd^c \psi_\tau, $$
where $ \psi = \phi + \frac{\mu-c}{2} \log s $.
Thanks to \eqref{eq::deltaphi'}, the path $\psi_\tau$ is a solution of \eqref{eq::pseudoCf} if and only if 
\begin{equation}
\left( \Delta \log s +  \frac{2}{|V|^2}\left( \Delta \mu - JV \log |V| \right) \right)_\tau + \scal(g_\tau) = \lambda,
\end{equation}
thus the thesis follows by Corollary \ref{cor::scalred} and observing that $\Delta \log s = 0$.
\end{proof}

\begin{rem}
It is a notable fact that for the pseudo-Calabi flow, one need not introduce the function $h$ in order to renormalize the reduced potentials $\psi_\tau$.
\end{rem}

\subsection{K\"ahler-Ricci flow}

The approach to K\"ahler-Ricci flow by means of symplectic reduction is due to La Nave and Tian with the introduction of the V-soliton equation \cite{LT}. Here we recover the equation of the un-normalized K\"ahler-Ricci flow by results of section \ref{sec::Hamcircact}.

\begin{thm}\label{thm::KRflow}
Assume the metric $g$ satisfies
\begin{equation}\label{eq::KRflow}
\Ric(\omega) + dd^c \log |V| + d \left( \frac{\Delta \mu - JV \log |V| + 1}{|V|^2} d^c \mu \right) = 0.
\end{equation}
Then the family of reduced metrics $g_\tau$ is a solution of the K\"ahler-Ricci flow on $M$:
\begin{equation}\label{eq::KRF}
\frac{\partial \omega_\tau}{\partial \tau} = - \Ric(\omega_\tau).
\end{equation}
\end{thm}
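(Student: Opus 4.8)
The plan is to compute both sides of the K\"ahler-Ricci flow equation \eqref{eq::KRF} as reductions of explicit forms on $P$ and match them. First I would handle the left-hand side. By Proposition \ref{prop::redform} one has $\omega_\tau = \sigma + dd^c\psi_\tau$ with $\psi = \phi + \frac{\mu-c}{2}\log s$, and since $\sigma$ does not depend on $\tau$ this gives $\frac{\partial\omega_\tau}{\partial\tau} = dd^c\frac{\partial\psi_\tau}{\partial\tau}$. As already computed in the proof of Proposition \ref{prop::riccired}, $JV(\psi) = \frac{|V|^2}{2}\log s$, so Lemma \ref{lem::dertau} yields $\frac{\partial\psi_\tau}{\partial\tau} = \left(\frac{JV(\psi)}{|V|^2}\right)_\tau = \frac{1}{2}\log s_\tau$. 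Hence $\frac{\partial\omega_\tau}{\partial\tau} = \frac{1}{2}dd^c\log s_\tau$, and the whole theorem reduces to proving the identity $\Ric(\omega_\tau) = -\frac{1}{2}dd^c\log s_\tau$.

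Next I would rewrite the hypothesis \eqref{eq::KRflow}. Splitting the numerator $\Delta\mu - JV\log|V| + 1$ as $(\Delta\mu - JV\log|V|) + 1$, equation \eqref{eq::KRflow} reads exactly $\rho + d\left(\frac{1}{|V|^2}d^c\mu\right) = 0$, where $\rho = \Ric(\omega) + dd^c\log|V| + d\left(\frac{\Delta\mu - JV\log|V|}{|V|^2}d^c\mu\right)$ is the form of Proposition \ref{prop::riccired}. Reducing at $\tau$, using $\rho_\tau = \Ric(\omega_\tau)$ and the fact from \eqref{eq::homDGA} that reduction commutes with $d$, I obtain $\Ric(\omega_\tau) = -\left(d\left(\frac{1}{|V|^2}d^c\mu\right)\right)_\tau$.

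It then remains to identify this with $\frac{1}{2}dd^c\log s_\tau$. To do so I would apply Lemma \ref{lem::dcFtau} to the invariant function $\log s$, for which $JV(\log s) = -2$; this gives $d^c\log s_\tau = \left(\frac{2}{|V|^2}d^c\mu + d^c\log s\right)_\tau$, which is precisely \eqref{eq::dcstau}. Applying $d$, using once more that reduction commutes with $d$, and using $dd^c\log s = 0$ (since $\log s$ is pluriharmonic on the $\mathbf C^*$-factor), I get $dd^c\log s_\tau = 2\left(d\left(\frac{1}{|V|^2}d^c\mu\right)\right)_\tau$. Combining with the previous step yields $\Ric(\omega_\tau) = -\frac{1}{2}dd^c\log s_\tau = -\frac{\partial\omega_\tau}{\partial\tau}$, which is exactly \eqref{eq::KRF}.

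The main obstacle is the delicate point in the last step: the form $\frac{1}{|V|^2}d^c\mu$ is \emph{not} itself reducible, since $i_V\left(\frac{1}{|V|^2}d^c\mu\right) = -1 \neq 0$, so one cannot simply reduce the hypothesis term by term. Only its exterior derivative is reducible, and the correct bookkeeping must pass through the reduced potential $\log s_\tau$ via \eqref{eq::dcstau}, crucially using that $dd^c\log s = 0$ and that reduction commutes with $d$. Once this is recognized, everything else is routine manipulation within the reduction calculus of Section \ref{sec::Hamcircact}.
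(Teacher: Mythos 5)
Your proof is correct and follows essentially the same route as the paper's: both arguments use Proposition \ref{prop::redform} and Lemma \ref{lem::dertau} (via $JV(\psi)=\tfrac{|V|^2}{2}\log s$) to reduce the flow equation to $\tfrac{1}{2}dd^c\log s_\tau + \Ric(\omega_\tau)=0$, then invoke Lemma \ref{lem::dcFtau} with $JV\log s=-2$ and Proposition \ref{prop::riccired} to recognize the hypothesis, regrouped as $\rho + d\left(\tfrac{1}{|V|^2}d^c\mu\right)=0$, as exactly this reduced identity. The only difference is cosmetic ordering (you rewrite the hypothesis first and reduce afterwards, while the paper substitutes at the end), and your closing remark on why only $d\left(\tfrac{1}{|V|^2}d^c\mu\right)$, not the $1$-form itself, descends is a valid clarification of a point the paper leaves implicit.
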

\begin{proof}
By Proposition \ref{prop::redform} the reduced K\"ahler form is given by
$$ \omega_\tau = \sigma + dd^c \psi_\tau, $$
where $ \psi = \phi + \frac{\mu-c}{2} \log s $.
Equation \eqref{eq::KRF} is clearly equivalent to 
\begin{equation}\label{KRFeq}
dd^c \left( \frac{\partial \psi_\tau}{\partial \tau}\right) + \Ric(\omega_\tau)  = 0. 
\end{equation}
On the other hand, by the identity $2JV(\psi)=|V|^2 \log s$ and Lemma \ref{lem::dertau} it follows $\frac{\partial \psi_\tau}{\partial \tau} = \frac{1}{2}\left(\log s\right)_\tau$.
Thus \eqref{eq::KRF} turns out to be equivalent to
\begin{equation}\label{eq::KRFEquiv}
\frac{1}{2}dd^c \log s_\tau + \Ric(\omega_\tau) = 0.
\end{equation}
By Lemma \ref{lem::dcFtau} and the equality $JV \log s = -2$ it follows
$$ \frac{1}{2}dd^c \log s_\tau = d \left(d^c \log s + \frac{d^c \mu}{|V|^2} \right)_\tau, $$
on the other hand, by Proposition \ref{prop::riccired} one has $\Ric(\omega_\tau) = \rho_\tau$, where
$$ \rho = \Ric(\omega) + dd^c \log |V| + d\left( \frac{\Delta \mu - JV \log |V|}{ |V|^2} d^c \mu \right). $$
Thus the thesis follows by substituting in \eqref{eq::KRFEquiv}.
\end{proof}

The following result is due to La Nave-Tian \cite[Theorem 3.7]{LT}. Here we give a proof resting on results of section \ref{sec::Hamcircact}. The reader is referred to their work for more details.

\begin{thm}\label{thm::nKRflow}
Suppose that for some smooth function $f = f (\mu)$ on $P$, the metric $g$ satisfies
\begin{equation}\label{eq::nKRflow}
\Ric(\omega) + dd^c \left( \log |V| + f \right) = \lambda \omega,
\end{equation}
being $\lambda = \frac{n \, c_1(M) \cup [\sigma]^{n-1}}{[\sigma]^n}$.
Then the family of reduced metrics $g_\tau$ is a solution of the K\"ahler-Ricci flow on $M$:
\begin{equation}\label{eq::nKRF}
\frac{\partial \omega_\tau}{\partial t} = - \Ric(\omega_\tau) + \lambda \omega_\tau.
\end{equation}
where $ \tau(t) = \frac{a+be^{\lambda t}}{\lambda} $ for some constants $a,b \in \mathbf R$.
\end{thm}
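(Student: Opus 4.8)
The plan is to mimic the proof of Theorem \ref{thm::KRflow}, adding the bookkeeping forced by the time reparametrization $\tau = \tau(t)$. By Proposition \ref{prop::redform} one has $\omega_\tau = \sigma + dd^c\psi_\tau$ with $\psi = \phi + \frac{\mu-c}{2}\log s$; since $2JV(\psi) = |V|^2\log s$, Lemma \ref{lem::dertau} gives $\frac{\partial\psi_\tau}{\partial\tau} = \frac12\log s_\tau$, so that, writing $\Theta := \frac12 dd^c\log s_\tau$,
$$\frac{\partial\omega_\tau}{\partial t} = \frac{d\tau}{dt}\,\frac{\partial\omega_\tau}{\partial\tau} = \frac{d\tau}{dt}\,\Theta.$$
By Proposition \ref{prop::riccired} the right-hand side of \eqref{eq::nKRF} equals $\lambda\omega_\tau - \Ric(\omega_\tau) = (\lambda\omega - \rho)_\tau$, so the whole statement reduces to the single identity $\frac{d\tau}{dt}\,\Theta = (\lambda\omega - \rho)_\tau$, from which $\tau(t)$ is to be read off.

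Next I would perform two reductions. Applying Lemma \ref{lem::dcFtau} to the invariant function $\tfrac12\log s$ (for which $JV(\tfrac12\log s) = -1$) and using $dd^c\log s = 0$ on the product $P$ gives $\Theta = \bigl(d\,\frac{1}{|V|^2}\,d^c\mu\bigr)_\tau$. Inserting the hypothesis \eqref{eq::nKRflow} into the expression for $\rho$ from Proposition \ref{prop::riccired} gives, with $q := \frac{\Delta\mu - JV\log|V|}{|V|^2}$, the relation $\lambda\omega - \rho = dd^c f - d(q\,d^c\mu) = d\bigl((f'(\mu) - q)\,d^c\mu\bigr)$. The factor $f'(\mu) - q$ is then pinned down by contracting \eqref{eq::nKRflow} with $V$: using $i_V\Ric(\omega) = -d\Delta\mu$ from \eqref{eq::deltamumoment}, $i_V d^c h = -JV(h)$ for invariant $h$ (as in the proof of Lemma \ref{lem::HamAct}) and Cartan's formula, the contraction collapses to $d\bigl[-\Delta\mu + JV\log|V| + f'(\mu)|V|^2 - \lambda\mu\bigr] = 0$; since $P$ is connected the bracket is a constant $C$, so $f'(\mu) - q = \frac{\lambda\mu + C}{|V|^2}$ and hence $\lambda\omega - \rho = d\bigl(\frac{\lambda\mu + C}{|V|^2}\,d^c\mu\bigr)$.

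It then remains to reduce at $\tau$. Expanding the exterior derivative, every summand carrying a factor $d\mu\wedge(\cdot)$ pulls back trivially to $S_\tau$ because $\iota_\tau^* d\mu = 0$, while on $S_\tau$ the invariant function $\lambda\mu + C$ takes the constant value $\lambda\tau + C$; this leaves $(\lambda\omega - \rho)_\tau = (\lambda\tau + C)\bigl(d\,\frac{1}{|V|^2}\,d^c\mu\bigr)_\tau = (\lambda\tau + C)\,\Theta$. Comparing with $\frac{\partial\omega_\tau}{\partial t} = \frac{d\tau}{dt}\,\Theta$ and cancelling $\Theta$ (nonzero away from the stationary points of the flow) turns \eqref{eq::nKRF} into the scalar ODE $\frac{d\tau}{dt} = \lambda\tau + C$, whose general solution $\tau(t) = \frac{a + be^{\lambda t}}{\lambda}$, with $a = -C$ and $b$ arbitrary, is precisely the claimed reparametrization.

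I expect the only genuine difficulty to be the reduction bookkeeping. The separate forms $dd^c f$ and $d(q\,d^c\mu)$ do not satisfy the reduction hypothesis $\iota_\tau^*(i_V\,\cdot\,) = 0$ individually, so they must not be reduced term by term; the safe route is to keep the combination $\lambda\omega - \rho$ intact — for which $i_V(\lambda\omega - \rho) = \lambda\,d\mu$ does pull back to zero on $S_\tau$ — and to invoke $\iota_\tau^* d\mu = 0$ only at the very end. Beyond this, the argument needs no analytic input past the elementary linear ODE.
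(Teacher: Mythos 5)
Your proof is correct and follows essentially the same route as the paper's: both contract the static equation \eqref{eq::nKRflow} with $V$ and use $i_V \Ric(\omega) = -d\Delta\mu$ together with connectedness of $P$ to produce the constant, rewrite $\lambda\omega - \rho$ via Proposition \ref{prop::riccired} as the exact form $d\bigl(\tfrac{\lambda\mu + C}{|V|^2}\,d^c\mu\bigr)$, reduce at $\tau$ to identify the right-hand side with $(\lambda\tau + C)\,\partial\omega_\tau/\partial\tau$, and solve the resulting linear ODE for $\tau(t)$. The only differences are cosmetic: your constant $C$ is the paper's $-a$, and you simplify $d^c f = f'(\mu)\,d^c\mu$ at the outset, whereas the paper carries along the term $d\bigl(d^c f - \tfrac{JV(f)}{|V|^2}\,d^c\mu\bigr)$ (which for $f = f(\mu)$ vanishes identically) and reduces it to zero separately.
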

\begin{proof}
Recall that $i_V \Ric(\omega) + d \Delta \mu = 0$ as proved in \eqref{eq::deltamumoment}.
Thus contracting \eqref{eq::nKRflow} with $V$ gives
\begin{equation}
- d\Delta \mu + d \left( JV \log |V| + JV(f) \right) = \lambda d\mu,  
\end{equation}
whence, by connectedness of $P$, it follows that there exists a real constant $a$ such that
\begin{equation}\label{Deltamu-JVlogV}
\Delta \mu - JV \log |V| = JV(f) - \lambda \mu + a.
\end{equation}
On the other hand, thanks to the expression of $\rho$ in Proposition \ref{prop::riccired}, equation \eqref{eq::nKRflow} turns out to be equivalent to
$$
\rho + d \left(d^c f - \frac{\Delta \mu - JV \log |V|}{|V|^2} d^c\mu \right) = \lambda \omega,
$$
and by \eqref{Deltamu-JVlogV} even to
\begin{equation}\label{eq::nKRfloweq}
\rho + d \left(d^c f - \frac{JV(f)-\lambda \mu + a}{|V|^2} d^c\mu \right) = \lambda \omega.
\end{equation}

Arguing as at the beginning of the proof of Theorem \ref{thm::KRflow} one finds
$$\frac{\partial \omega_\tau}{\partial \tau} = d \left(d^c \log s + \frac{d^c \mu}{|V|^2} \right)_\tau,$$
whence it follows that $d \left(\frac{\lambda\mu-a}{|V|^2}d^c\mu \right)$ reduces to $(\lambda \tau-a) \frac{\partial \omega_\tau}{\partial \tau}$.
On the other hand, by Lemma \ref{lem::dertau} the form $d\left(d^cf - \frac{JV(f)}{|V|^2}d^c\mu\right)$ reduces to $dd^c f(\tau) = 0$.
Thus reducing \eqref{eq::nKRfloweq} gives
$$ \Ric(\omega_\tau) + (\lambda \tau - a) \frac{\partial \omega_\tau}{\partial \tau} = \lambda \omega_\tau,$$
and the statement follows readily by reparametrizing $\tau$ with $t$.
\end{proof}
\section{The structure of the total space and the converse theorems}\label{section::total-space}

The following result says that any path of cohomologous K\"ahler metrics can be realized as a family of reduced metrics from a bigger K\"ahler manifold.

\begin{thm}
Let $g_t$ be a smooth path of cohomologous K\"ahler metrics on a compact complex manifold $M$ with $t\in [0,T)$, for some $T \in (0,+\infty]$.
Then there is $r\in[0,1)$ and a circle invariant K\"ahler metric on $P=M\times A$, where $A = \{ r<|w|<1\} \subset \mathbf C$, inducing the path $g_t$ on $M$ via K\"ahler reduction.
\end{thm}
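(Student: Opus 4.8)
The plan is to run the machinery of Section \ref{sec::Hamcircact} in reverse. By Lemma \ref{lem::HamAct} a circle--invariant K\"ahler form on $P=M\times A$ is the same datum as a triple $(\sigma,\phi,c)$, by Proposition \ref{prop::redform} its reductions are $\omega_\tau=\sigma+dd^c\psi_\tau$ with $\psi=\phi+\tfrac{\mu-c}{2}\log s$, and by Lemma \ref{lem::dertau}, since $JV(\psi)=\tfrac12|V|^2\log s$, one has the velocity relation $\partial_\tau\psi_\tau=\tfrac12\log s_\tau$. So I would fix $\sigma=g_0$ and $c=0$, and, using the $\partial\bar\partial$--lemma on the compact K\"ahler manifold $M$, write the given path as $\omega_t=\sigma+dd^c\varphi_t$ with $\varphi_0=0$ and $\varphi_t$ smooth in $(t,x)$. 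The goal is to produce an invariant $\phi$ on $P$ whose reduced potentials reproduce the $\varphi_t$, taking the moment value $\tau$ as time itself, $\tau=t$.

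Matching $\omega_\tau=\omega_{\tau}$ forces $\psi_\tau=\varphi_\tau+k(\tau)$ for a free additive normalization $k(\tau)$, and the velocity relation then \emph{prescribes the level sets}: writing $\rho=\log s$,
\[
\rho_\tau(x)=\log s_\tau(x)=2\,\partial_\tau\bigl(\varphi_\tau(x)+k(\tau)\bigr)=2\dot\varphi_\tau(x)+2k'(\tau).
\]
For fixed $x$ this gives the value of $\rho$ on the level $\{\mu=\tau\}$; provided $\tau\mapsto\rho_\tau(x)$ is strictly decreasing one inverts it fibrewise to define $\mu(x,\rho)$, and then recovers $\phi$ by integrating $\mu=JV(\phi)+c$ in the fibre direction. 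Positivity of $\omega=\pi^*\sigma+dd^c\phi$ then comes for free: along a regular level set the tangent space splits $\omega$--orthogonally as $\langle V,JV\rangle\oplus H$, with $H=\ker d\mu\cap\ker d^c\mu$, since $i_V\omega=d\mu$ and $i_{JV}\omega=d^c\mu$; the form restricts to $|V|^2$ on the first factor and descends to $\omega_\tau=\omega_t>0$ on the second, so $\omega>0$ exactly when $|V|^2>0$, i.e.\ exactly when $\rho_\tau$ is decreasing in $\tau$.

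Everything thus reduces to choosing $k(\tau)$ so that $\rho_\tau(x)$ is strictly decreasing and negative (so $s<1$), uniformly on the compact manifold $M$. Since $\partial_\tau\rho_\tau(x)=2\ddot\varphi_\tau(x)+2k''(\tau)$ and $\rho_\tau(x)=2\dot\varphi_\tau(x)+2k'(\tau)$, compactness of $M$ makes $\max_x\ddot\varphi_\tau$ and $\max_x\dot\varphi_\tau$ finite and continuous in $\tau$, so it suffices to pick $k$ with $k''(\tau)<-\max_x\ddot\varphi_\tau$ and $k'(\tau)<-\max_x\dot\varphi_\tau$; the inner annulus end is then taken care of by allowing $r=0$, i.e.\ $A=\{0<|w|<1\}$.

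The genuinely delicate point, and the one I expect to be the main obstacle, is \emph{global coverage}: I must define a K\"ahler metric on \emph{all} of $M\times A$, whereas the realized levels $\tau\in[0,T)$ a priori foliate only the region $\{\rho_\tau(x)<\rho<\rho_0(x)\}$, whose outer edge $\rho_0(x)=2\dot\varphi_0(x)+2k'(0)$ depends on $x$. I would resolve this by first extending the path to $t\in(-\infty,T)$, freezing $\omega_t\equiv\omega_0$ for $t\le-1$ and interpolating smoothly on $[-1,0]$; K\"ahlerity being open, the interpolation stays positive. On the frozen part $\dot\varphi_t\equiv0$, so $\rho_\tau(x)=2k'(\tau)$ is \emph{independent of $x$}, and choosing $k'(\tau)\to0^-$ as $\tau\to-\infty$ makes each fibre $\tau\mapsto\rho_\tau(x)$ a decreasing bijection onto the full interval $(-\infty,0)$. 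Hence $\mu(x,\cdot)$ is defined on all of $A$, the metric extends smoothly across the outer boundary as a genuine product, and the times $t\in[0,T)$ correspond to an interval of regular values over which every reduction equals $M$ and recovers $g_t$, which is the assertion. Verifying the smoothness and positivity of the interpolation, and checking that the constructed $\mu$ is a submersion onto the whole interval, are then the only remaining routine points.
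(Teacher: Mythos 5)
Your construction is correct, and at its core it coincides with the paper's own proof: the matching $\psi_\tau=\varphi_\tau+k(\tau)$ combined with the velocity relation $\partial_\tau\psi_\tau=\tfrac12(\log s)_\tau$ is precisely the paper's functional equation \eqref{eq::F=phiequiv}, $\phi+\tfrac{1}{2}JV(\phi)\log s=\psi_{JV(\phi)}\circ\pi$ (applying $JV$ to it yields exactly your level-set prescription $\log s=2\dot\varphi_\mu+2k'(\mu)$); your requirement $k''(\tau)<-\max_M\ddot\varphi_\tau$ is the paper's normalization $\psi_t\mapsto\psi_t+a_t$ forcing $\partial_t^2\psi_t<0$; and your positivity criterion (strict monotonicity of $\rho_\tau$, equivalently $|V|^2>0$) agrees with the paper's, by \eqref{eq::dertaustau}. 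The genuine differences are these. The paper solves \eqref{eq::Feq} abstractly by the method of characteristics and verifies positivity by computing $dd^c\phi$ from \eqref{eq::ddcphi}; you instead produce the solution by explicit fibrewise inversion --- which is exactly the singular (envelope) solution of that Clairaut-type equation --- and obtain positivity from the $\omega$-orthogonal splitting $\langle V,JV\rangle\oplus\left(\ker d\mu\cap\ker d^c\mu\right)$; these are equivalent, so this is packaging. More substantively, your ``global coverage'' step treats a point the paper passes over in silence: the envelope solution lives only on the region swept by the levels, whose outer edge $\rho=2\,\partial_t\psi_t|_{t=0}(x)$ is an $x$-dependent graph, so by itself it does not define a metric on any product $M\times A$; moreover the paper's side condition $JV(\phi)=0$ on $\{s=1\}$ is incompatible with the envelope unless $\partial_t\psi_t|_{t=0}$ is constant on $M$, and fed into the characteristics it selects the degenerate solution $\phi=\psi_0\circ\pi$, for which $\pi^*\sigma+dd^c\phi$ is not K\"ahler. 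Your freeze-and-extend device, which flattens the outer edge to the $x$-independent level $\rho=2k'(\tau)\to0^-$, is exactly what is needed to land on $M\times\{0<|w|<1\}$; on this point your argument is more complete than the paper's.

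Two details need tightening, both routine. First, integrating $\mu=JV(\phi)$ along the fibres determines $\phi$ only up to a pullback $\pi^*u$, and a wrong choice shifts every reduction by $dd^cu$; fix the fibrewise constant by defining $\phi=\varphi_\mu+k(\mu)-\tfrac{\mu}{2}\log s$ outright and checking $JV(\phi)=\mu$ (one line, using $\dot\varphi_\mu+k'(\mu)=\tfrac12\log s$). Second, you cannot both freeze at $\omega_0$ for $t\le-1$ and keep $\tau=t$ on $[0,T)$: a nondecreasing reparametrization that is constant on $(-\infty,-1]$ and the identity on $[0,T)$ would be constant on $[-1,0]$, contradicting smoothness at $0$. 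Instead, extend $\varphi_t$ smoothly to $(-\epsilon,T)$ (Seeley extension; positivity survives for small $\epsilon$ by compactness of $M$) and freeze at the value $-\epsilon/2$, i.e.\ take $\hat\varphi_u=\varphi_{\chi(u)}$ with $\chi$ smooth, nondecreasing, $\chi\equiv-\epsilon/2$ on $(-\infty,-1]$ and $\chi(u)=u$ on $[0,T)$; alternatively, realize a reparametrized path and restore the parametrization with Proposition \ref{prop::reptau}.
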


\begin{proof}
Let $\sigma_t$ be the K\"ahler form of $g_t$.
By $dd^c$-lemma there exists a smooth path of $\sigma$-plurisubharmonic functions $\psi_t$ such that $\sigma_t = \sigma + dd^c \psi_t$, being $\sigma=\sigma_0$.
Note that $\psi_t$ is just defined up to an additive constant possibly depending on $t$. We will make use of this arbitrariness later.

Recalling that $\pi : P \to M$ is the projection on the first factor, and $s$ is the smooth function on $P$ defined by $s(p)=|w|^2$ if $p=(x,w)$, let $F$ be the function on $[0,T) \times P$ defined by 
\begin{equation}
F(t,p) = \psi_t(\pi(p)) - \frac{t}{2} \log s(p).
\end{equation}

We look for a circle invariant function $\phi$ on $P$ such that 
\begin{equation}\label{eq::Feq}
F(JV(\phi),p) = \phi.
\end{equation}
for all $p \in P$, and $JV(\phi)(p)=0$ if $s(p)=1$.
This equation is clearly equivalent to 
\begin{equation}\label{eq::F=phiequiv}
\phi + \frac{JV(\phi)}{2} \log s = \psi_{JV(\phi)}\circ \pi,
\end{equation}
thus by Proposition \ref{prop::redform} and Lemma \ref{lem::HamAct} the function $\psi_\tau$ is the K\"ahler potential of reduced metrics whenever one can find a circle invariant solution $\phi$ to \eqref{eq::Feq} satisfying $\pi^* \sigma + dd^c \phi >0$. 
If a solution to \eqref{eq::Feq} exists, letting $\mu=JV(\phi)$, by \eqref{eq::ddcphi} and \eqref{eq::F=phiequiv} it holds
\begin{eqnarray}
dd^c \phi
&=& d\left( \left.\pi^* d^c\psi_t + \frac{\partial \psi_t \circ \pi}{\partial t} d^c\mu \right|_{t=\mu} - \frac{1}{2} \log s \, d^c\mu - \frac{\mu}{2}d^c\log s \right) \nonumber \\
&=& d\left( \left.\pi^* d^c\psi_t \right|_{t=\mu} - \frac{\mu}{2}d^c\log s \right) \nonumber \\
&=& \left.\pi^* dd^c\psi_t \right|_{t=\mu} + d\mu \wedge \left( \left. \pi^* d^c \frac{\partial \psi_t}{\partial t} \right|_{t=\mu} -\frac{1}{2} d^c \log s \right) \nonumber \\
&=& \left.\pi^* dd^c\psi_t - \frac{\partial^2 \psi_t \circ \pi}{\partial t^2} d\mu \wedge d^c\mu \right|_{t=\mu}, \label{eq::ddcphipsi}
\end{eqnarray}
where the second and last equalities follow from the identity $\frac{1}{2} \log s = \left. \frac{\partial \psi_t \circ \pi}{\partial t} \right|_{t=\mu}$, which in turn descends readily from applying $JV$ to \eqref{eq::F=phiequiv}.
Thus for any vector field $X$ on $P$ one has
$$ (\pi^* \sigma + dd^c \phi) (X,JX) = \left.(\sigma+dd^c \psi_t)\right|_{t=\mu} (\pi_*X , \pi_*JX) - \left. \frac{\partial^2 \psi_t \circ \pi}{\partial t^2} \right|_{t=\mu} (X(\mu)^2 + JX(\mu)^2), $$
whence it is clear that $\pi^* \sigma + dd^c \phi>0$ if and only if $\frac{\partial^2 \psi_t \circ \pi}{\partial t^2} <0$.
The latter inequality can be made to hold with no additional assumption simply replacing $\psi_t$ by $\psi_t + a_t$ where
$$ a_t = -t^2 - \int_0^t \int_0^v \sup_M \frac{\partial^2 \psi_u}{\partial u^2} dudv.$$

It remains then to show that \eqref{eq::Feq} can be solved.
This follows by a standard application of method of characteristics.
It is not difficult to see that projected characteristics are integral curves of $JV$ in $P$.
\end{proof}
\begin{rem}
The Theorem above clearly implies that any of geometric flows considered in the previous section-- with the notable exception of the K\'ahler-Ricci flow for which $\frac{\partial h}{\partial \tau}$ is a nontrivial class (see formula \eqref{eq::connection} below), i.e. when the initial metric is not canonical (e.g., the flow on projective manifolds with non ample canonical bundles), which is treated in \cite{LT}-- can be realized as a path of reduced K\"ahler metric coming from a suitable K\"ahler metric on $P$.
\end{rem}
This can be proved in a more geometrical way, as illustrated for the Calabi flow in the following subsection. This proof is less elegant and less general (in that one needs to reformulate the proof in avery case) but it has the advantage of working even when one is forced to take an $M$ consisting of a family of non-trivial principal $S^1$-bundles.

\subsection{Converse to Theorem \ref{thm::Calabiflow}}

Let be given a Hamiltonian holomorphic circle action on a K\"ahler manifold $(P,\omega, J)$ with associated metric $g$ and with moment map $\mu$. Fix a regular value $a$ of the moment map.
Let $z_1,\cdots, z_m$, with $m=n-1$, be holomorphic coordinates on the quotient
manifold $M=\mu^{-1}(a)/S^1$, and let $\tau $ be the "moment map coordinate", i.e.,
$\mu=\tau$. By definition, we have $d\tau  =i_{V} \omega$.

Let $Q$ be the {\it horizontal distribution}, i.e., the hortogonal complement (with respect to the metric $g$) of the span of $V$ and $JV$ (clearly the quotient map $\pi_a: \mu ^{-1}(a) \to M$ induces an isomorphism $d\pi _a: Q\to TM$). Then we can define a ~1-form $\theta$ by:
$$\theta (V)=1, ~\theta (J V)=0, ~\theta \big |_{Q}=0,$$
One can show (cf. Lemma 3.1 in \cite{LT}) that:
\begin{lem}
For the above local coordinates, we have $g(dz_i, d\tau)=0$, $g(dz_i,\theta)=0$ and $g(\theta ,d\tau)=0$,
where $g$ also denotes the induced metric on the cotangent bundle of $M$.
In particular, in these coordinates $g$ and $\omega$ take the form (reap.):
\begin{equation}\label{invmetric}
g = h_{i\bar j} dz_i d\bar z _j+ w \, d \tau ^2 +
\frac{1}{w}\, \theta^2\end{equation}
and 
\begin{equation}\label{invform}
\omega =\sqrt{-1} h_{i\bar j} dz_i \wedge d\bar z _j-d\tau \wedge \theta \end{equation}
\end{lem}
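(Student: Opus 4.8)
The plan is to read off both the three orthogonality relations and the two coframe expressions from the $g$-orthogonal splitting of $TP$ determined by the action and the Kähler structure. Throughout I use the convention $\omega(X,Y)=g(JX,Y)$, equivalently $g(X,Y)=\omega(X,JY)$, which is the one forced by the earlier identities $d\tau=i_V\omega$ and $JV(\mu)=\omega(V,JV)=|V|^2$ used in Section \ref{sec::Hamcircact}.

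First I would establish that along the level set the tangent space splits $g$-orthogonally as $T_pP=Q\oplus\mathbb R V\oplus\mathbb R JV$. By definition $Q\perp\spn(V,JV)$. That $V\perp JV$ follows from $g(V,JV)=\omega(V,J(JV))=-\omega(V,V)=0$. Moreover $Q$ is $J$-invariant, since for $W\in Q$ one has $g(JW,V)=-g(W,JV)=0$ and $g(JW,JV)=g(W,V)=0$; this invariance is what allows the coordinates $z_i$ on the quotient to play the role of holomorphic coordinates along $Q$. Along the way I record $d\tau=i_V\omega=g(JV,\cdot)$, so that $(d\tau)^\sharp=JV$.

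Next I would identify the musical duals of the three families of one-forms. Since $z_i$ is pulled back from the quotient it is invariant under the $\mathbb C^*$-action, so $dz_i(V)=0$, and by holomorphicity $dz_i(JV)=\sqrt{-1}\,dz_i(V)=0$; hence $(dz_i)^\sharp\in Q_{\mathbb C}$. From $\theta(V)=1$, $\theta(JV)=0$, $\theta|_Q=0$ together with the orthogonal splitting, $\theta^\sharp$ must be a multiple of $V$, and $g(V,V)=|V|^2$ forces $\theta^\sharp=V/|V|^2$. The three relations are then immediate: $g(dz_i,d\tau)=g((dz_i)^\sharp,JV)=0$ and $g(dz_i,\theta)=g((dz_i)^\sharp,V/|V|^2)=0$ because $(dz_i)^\sharp\in Q$, while $g(\theta,d\tau)=|V|^{-2}g(V,JV)=0$. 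For the explicit formulas I would work in the frame dual to $\{dz_i,d\bar z_i,d\tau,\theta\}$, which one checks to be $\{W_i,\bar W_i,\ e_\tau=JV/|V|^2,\ e_\theta=V\}$ with $W_i\in Q^{1,0}$ the horizontal lifts satisfying $dz_i(W_j)=\delta_{ij}$. Setting $w=1/|V|^2$, the only nonzero metric components are $g(W_i,\bar W_j)=h_{i\bar j}$, $g(e_\tau,e_\tau)=|V|^{-2}=w$ and $g(e_\theta,e_\theta)=|V|^2=1/w$, all cross terms vanishing by the splitting, which gives $g=h_{i\bar j}\,dz_i\,d\bar z_j+w\,d\tau^2+w^{-1}\theta^2$. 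For the form, $\omega$ has no mixed horizontal–vertical terms since $\omega(W,V)=-d\tau(W)=0$ and $\omega(W,JV)=g(W,V)=0$ for $W\in Q$; on $Q$ it restricts to $\sqrt{-1}\,h_{i\bar j}\,dz_i\wedge d\bar z_j$, while $\omega(e_\tau,e_\theta)=|V|^{-2}\omega(JV,V)=-1=(-d\tau\wedge\theta)(e_\tau,e_\theta)$, yielding $\omega=\sqrt{-1}\,h_{i\bar j}\,dz_i\wedge d\bar z_j-d\tau\wedge\theta$.

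The only genuinely delicate point, and the one I would flag as the main obstacle, is justifying that $dz_i$ annihilates both $V$ and $JV$, equivalently that the horizontal lifts $W_i$ may be taken inside $Q^{1,0}$ compatibly with the holomorphic coordinates downstairs. This is precisely where one must use that the circle action is \emph{holomorphic} and Hamiltonian, so that $\mu^{-1}(a)/S^1$ inherits a bona fide complex structure from $Q$ and each $z_i$ descends as a holomorphic function, rather than merely that $Q$ is a smooth distribution. Once this is in place, the remainder is the short bookkeeping computation in the orthogonal frame sketched above.
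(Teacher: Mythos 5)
Your proof is correct. There is, however, nothing in the paper to compare it against: the authors state this lemma with the remark ``One can show (cf.\ Lemma 3.1 in \cite{LT})'' and give no proof at all, deferring entirely to La Nave--Tian. Your argument therefore supplies the missing proof, and it does so along the natural lines one would expect that reference to follow: the $g$-orthogonal, $J$-invariant splitting $TP = Q \oplus \mathbf{R}V \oplus \mathbf{R}JV$, the identification of the musical duals $(d\tau)^\sharp = JV$, $\theta^\sharp = V/|V|^2$, $(dz_i)^\sharp \in Q_{\mathbf C}$ (which gives the three orthogonality relations in one line each), and then the component computation in the frame $\{W_i, \bar W_i, JV/|V|^2, V\}$ dual to the coframe $\{dz_i, d\bar z_i, d\tau, \theta\}$. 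Your sign conventions are the ones forced by the paper ($d\tau = i_V\omega$ and $JV(\mu)=\omega(V,JV)=|V|^2$ require $\omega(X,Y)=g(JX,Y)$), and your computation $\omega(JV/|V|^2,V)=-1$ correctly reproduces the $-d\tau\wedge\theta$ term. The point you flag as the only delicate one is indeed the crux: $dz_i(V)=dz_i(JV)=0$ holds because the $z_i$ are extended off the level set $\mu^{-1}(a)$ as pullbacks under the local \emph{holomorphic} $\mathbf C^*$-quotient (constancy along the flows of both $V$ and $JV$), so that each $z_i$ is a genuinely holomorphic, invariant function near $\mu^{-1}(a)$; in the paper's product setting $P = M\times A$ this is automatic since $\pi$ is a holomorphic, $\mathbf C^*$-invariant projection. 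With that in place, your dual-frame bookkeeping ($g(e_\tau,e_\tau)=1/|V|^2=w$, $g(V,V)=|V|^2=1/w$, all cross terms vanishing, and $g(W_i,W_j)=0$ by $J$-invariance of $g$) gives exactly \eqref{invmetric} and \eqref{invform}.
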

where $\frac{1}{w}= |V|^2$.

One can see that $J$ preserves $Q^{1,0}:= Q\cap T{(1,0)}P$ and that $\alpha:=w\, d\tau -
\sqrt{-1} \theta $ is of type $(1,0),$. We can then rewrite $g$ as
$$\omega =\sqrt{-1}\, h_{i\bar j} dz_i d\bar z _j+|V|^2 \,\alpha \wedge \bar \alpha.$$
Also, we have the decomposition: $T^{(1,0)}M = Q^{(1,0)} \oplus \langle \alpha \rangle$.
Following this decomposition one can write, for any $S^1$-invariant function $f$:
$$\partial f = \partial ^h f+ JV(f) \, \alpha \qquad \qquad \bar \partial f =\bar  \partial ^h f+ JV(f) \, \bar \alpha$$
and these identities define $\partial ^h f\in Q^{(1,0)}$ and $\bar \partial ^h f\in Q^{(0,1).}$
\noindent
We have the following import an formula about the connection ~1-form of the principle $S^1$-bundle:
\begin{lem}\label{connection}
One has:
\begin{equation}\label{eq::connection}
d \theta = \sqrt{-1}\left\{ - \frac{\partial {h_{i\bar
    j}}}{\partial \tau } dz_i\wedge d \bar z _j +\partial ^h \log \left(\frac{1}{|V|^2}\right) \wedge\bar  \alpha +\alpha \wedge \bar \partial ^h \log \left(\frac{1}{|V|^2}\right) -d  \log \left(\frac{1}{|V|^2}\right)\wedge \theta \right\}\end{equation}
\end{lem}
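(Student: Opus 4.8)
The plan is to compute $d\theta$ one bidegree at a time, using two independent structural facts: the integrability of $J$ (equivalently, that $\alpha = |V|^{-2}d\tau - \sqrt{-1}\theta$ is of type $(1,0)$), which pins down the $(2,0)$ and $(0,2)$ parts of $d\theta$, and the closedness of $\omega$, which pins down the $(1,1)$ part. Throughout I abbreviate $\ell = \log(1/|V|^2)$, and I invert the defining relations of $\alpha,\bar\alpha$ into the dictionary $d\tau = \tfrac{1}{2}|V|^2(\alpha+\bar\alpha)$ and $\theta = \tfrac{\sqrt{-1}}{2}(\alpha-\bar\alpha)$. The point of this dictionary is that $d\tau$ and $\theta$ are each mixtures of types $(1,0)$ and $(0,1)$, so every computation must be organized in the frame $\{dz_i,\,d\bar z_j,\,\alpha,\,\bar\alpha\}$; the decomposition $\partial f = \partial^h f + JV(f)\alpha$, $\bar\partial f = \bar\partial^h f + JV(f)\bar\alpha$ for invariant $f$ is precisely what isolates the horizontal pieces $\partial^h f,\bar\partial^h f$ that wedge nontrivially against both $\alpha$ and $\bar\alpha$.

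First I extract the $(2,0)$ and $(0,2)$ components from integrability. Since $\alpha$ is $(1,0)$ and $J$ is integrable, $d\alpha$ has no $(0,2)$ part. I would write $d\alpha = d(|V|^{-2})\wedge d\tau - \sqrt{-1}\,d\theta$, expand $d(|V|^{-2}) = \partial^h(|V|^{-2}) + \bar\partial^h(|V|^{-2}) + \tfrac{\partial}{\partial\tau}(|V|^{-2})\,d\tau$, and substitute $d\tau = \tfrac{1}{2}|V|^2(\alpha+\bar\alpha)$; the $d\tau$-term drops since $d\tau\wedge d\tau=0$, and the $(0,2)$-part of $d(|V|^{-2})\wedge d\tau$ is a multiple of $\bar\partial^h\ell\wedge\bar\alpha$. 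Setting $(d\alpha)^{(0,2)}=0$ then solves for $(d\theta)^{(0,2)}$ as a multiple of $\bar\partial^h\ell\wedge\bar\alpha$, and reality of $\theta$ gives $(d\theta)^{(2,0)}$ as the conjugate multiple of $\partial^h\ell\wedge\alpha$. After re-expressing $\alpha,\bar\alpha$ back in terms of $d\tau$ and $\theta$, these pure-type contributions are exactly what feeds the $\partial^h\ell\wedge\bar\alpha$, $\alpha\wedge\bar\partial^h\ell$ and $-d\ell\wedge\theta$ terms of the statement.

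Next I extract the $(1,1)$ component from $d\omega=0$. Differentiating the invariant expression \eqref{invform}, $\omega = \sqrt{-1}h_{i\bar j}\,dz_i\wedge d\bar z_j - d\tau\wedge\theta$, and using $d(d\tau)=0$ gives $d\tau\wedge d\theta = -\sqrt{-1}\,dh_{i\bar j}\wedge dz_i\wedge d\bar z_j$. Writing $dh_{i\bar j} = \partial^h h_{i\bar j} + \bar\partial^h h_{i\bar j} + \tfrac{\partial h_{i\bar j}}{\partial\tau}\,d\tau$ (there is no $\theta$-component, since $h_{i\bar j}$ is invariant), the purely horizontal part is killed by the K\"ahler symmetry on $M$ and contributes nothing, while the $\tfrac{\partial h_{i\bar j}}{\partial\tau}\,d\tau$ part forces the horizontal–horizontal $(1,1)$ component of $d\theta$ to equal $-\sqrt{-1}\tfrac{\partial h_{i\bar j}}{\partial\tau}\,dz_i\wedge d\bar z_j$, which is the first term of the formula. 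Matching the remaining $(2,1)$ and $(1,2)$ parts of $d\tau\wedge d\theta$ (i.e. wedging the known $(1,1)$-piece against $d\tau^{(1,0)}=\tfrac12|V|^2\alpha$ and $d\tau^{(0,1)}=\tfrac12|V|^2\bar\alpha$) then fixes the mixed terms pairing a horizontal form with $\alpha$ or $\bar\alpha$, as well as the $\alpha\wedge\bar\alpha$ (equivalently $d\tau\wedge\theta$) term; together with the pure-type contributions from the previous step these assemble into $\partial^h\ell\wedge\bar\alpha + \alpha\wedge\bar\partial^h\ell - d\ell\wedge\theta$.

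The main obstacle is the bidegree bookkeeping: because $d\tau$ and $\theta$ are not of pure type, one must move fluently between the $\{d\tau,\theta\}$ and $\{\alpha,\bar\alpha\}$ descriptions and track both the $\tau$-dependence of the fiber norm $|V|^2$ and of the horizontal metric $h_{i\bar j}$. Reality of $\theta$ halves the work by producing the $(2,0)$ part from the $(0,2)$ part, and the precise factors of $\sqrt{-1}$ and $\tfrac12$ in the final formula are dictated by the normalizations chosen in \eqref{invform} and in the decomposition of $\partial f$. The only genuinely non-formal input is the vanishing of $(d\alpha)^{(0,2)}$, i.e. integrability, which is what guarantees that the solution for the antiholomorphic part of $d\theta$ is consistent with the $(1,1)$-part obtained from $d\omega=0$.
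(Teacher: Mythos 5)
Your route is genuinely different from the paper's: the paper does not rederive the formula at all, but quotes Lemma 3.2 of \cite{LT} and merely translates the terms $\partial^h w\wedge d\tau$, $d\tau\wedge\bar\partial^h w$, $\partial^h\log w\wedge\theta$, $\bar\partial^h\log w\wedge\theta$ appearing there into the $\alpha,\bar\alpha$ notation. Your two structural inputs --- $(d\alpha)^{(0,2)}=0$ from integrability, and $d\omega=0$ --- are indeed the substantive ingredients behind that lemma, and they do correctly determine the $(2,0)$ and $(0,2)$ parts of $d\theta$, as well as the $(1,1)$ part modulo one term. But there is a genuine gap at exactly that term.

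The gap: from $d\omega=0$ you get $d\tau\wedge d\theta=-\sqrt{-1}\,dh_{i\bar j}\wedge dz_i\wedge d\bar z_j$, and this equation can never ``fix \ldots\ the $\alpha\wedge\bar\alpha$ (equivalently $d\tau\wedge\theta$) term'' as you claim. Since $d\tau$ is a multiple of $\alpha+\bar\alpha$, one has $d\tau\wedge\alpha\wedge\bar\alpha=0$: the coefficient of $\alpha\wedge\bar\alpha$ in $d\theta$ is annihilated by wedging with $d\tau$, hence invisible to the $(2,1)$ and $(1,2)$ matching you describe. Concretely, if a $2$-form satisfies every constraint you impose (reality, the prescribed $(2,0)$ and $(0,2)$ parts, and the prescribed value of $d\tau\wedge d\theta$), then so does that form plus $f\,d\tau\wedge\theta$ for an arbitrary invariant real function $f$, because $d\tau\wedge\theta$ is real, of type $(1,1)$, and killed by wedging with $d\tau$. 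So your argument determines $d\theta$ only up to a one-function ambiguity, and in particular cannot produce the $\frac{\partial}{\partial\tau}\log\left(\frac{1}{|V|^2}\right)\,d\tau\wedge\theta$ piece of the term $-d\log\left(\frac{1}{|V|^2}\right)\wedge\theta$ in \eqref{eq::connection}. The missing input is the Cartan-formula identity $i_V d\theta = L_V\theta - d(i_V\theta)=0$, which holds because $\theta$ is $S^1$-invariant and $\theta(V)\equiv 1$; it says $d\theta$ is basic, and it is precisely what detects and fixes the $\alpha\wedge\bar\alpha$ component (and also provides the consistency check on your mixed terms). With this identity added, your bidegree bookkeeping does close up into a complete, self-contained proof --- modulo normalization constants, about which the paper's own statement and translation identities are themselves not fully consistent (for instance, the displayed identity $\partial^h\log\left(\frac{1}{|V|^2}\right)\wedge\bar\alpha=\partial^h w\wedge d\tau+\partial^h\log w\wedge\theta$ in the paper's proof drops a factor $\sqrt{-1}$ on the $\theta$-term).
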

\begin{proof}
This is Lemma 3.2 in \cite{LT}, after using that:
$$\partial ^h \log \left(\frac{1}{|V|^2}\right) \wedge\bar  \alpha= \partial ^h w\wedge d\tau +\partial ^h \log w\wedge \theta$$
and the analogous:
$$\alpha \wedge \bar \partial ^h \log \left(\frac{1}{|V|^2}\right)= d\tau \wedge \bar  \partial ^h w+ \bar \partial ^h \log w\wedge \theta.$$
\end{proof}

We can prove the sought after converse to Theorem \ref{thm::Calabiflow}:

\begin{thm}\label{theorem::total-space}
Maintaining notations as in Theorem \ref{thm::Calabiflow}, let $(M, \omega(t))$ be a solution of the Calabi-flow--i.e., eq. \eqref{eq::Calabiflowforms}-- and assume that along the flow:
$$\frac{\partial  \scal(g_\tau)}{\partial \tau}>-c$$
for some uniform $c>0.$
Then there exists a metric $g$ and a function $h=h(\mu)$ such that:
$$R= \log s+h$$
\end{thm}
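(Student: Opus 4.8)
The plan is to reverse the computation of Theorem \ref{thm::Calabiflow}: starting from the Calabi flow solution $\omega(t)$ on $M$, I build the K\"ahler form on $P$ in the adapted shape $\omega = \sqrt{-1}\, h_{i\bar j}\, dz_i\wedge d\bar z_j - d\tau\wedge\theta$ of \eqref{invform}, reading the horizontal block $h_{i\bar j}(z,\tau)$ directly off $\omega_\tau := \omega(t)|_{t=\tau}$ and then determining the two remaining pieces of data, the fiber coefficient $|V|^2$ and the connection $1$-form $\theta$. Since the Calabi flow preserves the K\"ahler class, the $dd^c$-lemma gives a smooth path of potentials with $\omega_\tau = \sigma + dd^c \psi_\tau$, which I normalize by a $\tau$-dependent constant to be fixed together with $h(\tau)$ below.

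First I would pin down $|V|^2$ (equivalently $w = 1/|V|^2$). The sought identity $R = \log s + h(\mu)$ reduces, via $R_\tau = \scal(g_\tau)$ from Corollary \ref{cor::scalred}, to $\log s_\tau = \scal(g_\tau) - h(\tau)$ on each level set; differentiating in $\tau$ and using \eqref{eq::dertaustau}, namely $\partial_\tau \log s_\tau = (-2/|V|^2)_\tau$, forces the relation $2/|V|^2 = h'(\tau) - \partial_\tau\scal(g_\tau)$. Thus $|V|^2$ is completely determined by the flow once $h$ is chosen, and the only requirement is that this denominator be of the correct sign and bounded, so that $|V|^2$ is a positive, finite fiber coefficient. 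This is exactly where the uniform hypothesis on $\partial_\tau\scal(g_\tau)$ enters: taking $h$ affine in $\tau$, with slope dictated by the bound, makes $h'(\tau) - \partial_\tau\scal(g_\tau)$ keep a fixed sign and stay controlled uniformly along the whole time interval, so that the fiber neither collapses nor blows up and $\log s_\tau = \scal(g_\tau) - h(\tau)$ sweeps out a genuine interval, fixing the annulus $A = \{ r < |w| < 1\}$.

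Next I would recover $\theta$ by integrating the connection formula of Lemma \ref{connection}: its exterior derivative is prescribed in terms of $\partial_\tau h_{i\bar j}$ and $d\log(1/|V|^2)$, both now known, and one checks that the prescribed $2$-form is closed so that a connection with this curvature exists. Here the Calabi-flow nature is convenient: since $\partial_\tau\omega_\tau = \frac{1}{2} dd^c\scal(g_\tau)$ is $dd^c$-exact, the horizontal part of $d\theta$ is cohomologically trivial, the $S^1$-bundle is a product, and $P = M\times A$ as expected (the general formulation with $\theta$ is precisely what would accommodate nontrivial bundles for other flows). Assembling $\omega = \sqrt{-1}\, h_{i\bar j}\, dz_i\wedge d\bar z_j - d\tau\wedge\theta$, closedness $d\omega = 0$ holds by Lemma \ref{connection}, while positivity holds by the previous paragraph, so $g$ is a bona fide K\"ahler metric on $P$ making the standard circle action Hamiltonian.

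Finally I would verify the target equation. By Corollary \ref{cor::scalred} the reconstructed metric satisfies $R_\tau = \scal(g_\tau)$, while by construction $\log s_\tau = \scal(g_\tau) - h(\tau)$; hence $(R - \log s)_\tau = h(\tau)$ is constant on $M$ for every regular $\tau$. Since a circle-invariant function on $P$ is determined by its reductions over all regular values, this forces $R - \log s = h(\mu)$ globally, which is the claim. I expect the main obstacle to be the positivity and non-degeneracy of the reconstructed metric, i.e. guaranteeing $|V|^2 > 0$ uniformly over the full time interval; this is exactly the step that consumes the uniform bound on $\partial_\tau\scal(g_\tau)$, whereas the closedness check and the concluding reduction are comparatively formal once the data $(h_{i\bar j}, |V|^2, \theta)$ are in place.
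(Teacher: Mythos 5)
Your overall route is the same as the paper's: read the horizontal block $h_{i\bar j}$ off the flow, determine $|V|^2$ from the fiber component of $R=\log s+h$, recover the connection form $\theta$ via Lemma \ref{connection} together with a closedness check, and conclude by reduction. Moreover, your key identity is derived correctly and consistently with Section \ref{sec::Hamcircact}: differentiating $\log s_\tau = \scal(g_\tau)-h(\tau)$ and using \eqref{eq::dertaustau} gives $\left(2/|V|^2\right)_\tau = h'(\tau)-\partial_\tau \scal(g_\tau)$. The genuine gap is in the step you yourself single out as the crux. Positivity of $|V|^2$ requires $h'(\tau) > \partial_\tau\scal(g_\tau)(x)$ for every $x$, i.e.\ $h'$ must dominate $\partial_\tau\scal$ from \emph{above}; but the hypothesis $\partial_\tau\scal > -c$ is a \emph{lower} bound, so an affine $h$ with ``slope dictated by the bound'' produces the wrong sign: with $h'=-c$ one gets $h'-\partial_\tau\scal<0$ everywhere. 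What the stated hypothesis actually yields, once positivity has been secured by other means, is $2/|V|^2 = h'-\partial_\tau\scal < h'+c$, i.e.\ a uniform \emph{lower} bound on $|V|^2$ (no fiber collapse) --- not positivity, and not an upper bound on $|V|^2$. So, as written, your construction of the fiber coefficient fails.

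The repair is simple but different from what you wrote: since $M$ is compact and the flow is smooth, $\tau \mapsto \max_M \partial_\tau\scal(g_\tau)$ is continuous on the time interval, so one may choose a (generally non-affine) $h$ with $h'(\tau) > \max_M\partial_\tau\scal(g_\tau)$ for all $\tau$; then $|V|^2>0$ pointwise, with no uniform hypothesis at all. (For comparison, the paper's own proof writes $|V|_g^2 = 1/\partial_t\left(\scal(g_\tau)-h\right)$ --- the opposite sign --- which is why the stated lower-bound hypothesis appears there to be exactly what is needed; that sign is inconsistent with \eqref{eq::dertaustau}, equivalently with $JV(\log s)=-2$, so the defect is inherited from the paper, but your argument cannot simply assert that the stated hypothesis closes your version of the positivity step.) Separately, you defer the closedness of the prescribed $d\theta$ to ``one checks'', and the reason you offer --- $dd^c$-exactness of $\partial_\tau\omega_\tau$ --- only gives triviality of its cohomology class (hence a product bundle), not closedness of the form itself. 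Closedness is where the Calabi flow equation is actually consumed: as in the paper, it reduces to the identity $\partial^2_\tau(g_\tau)_{i\bar j} = -\partial_i\partial_{\bar j}\left(1/|V|^2\right)$, which follows by differentiating \eqref{eq::Calabiflowforms} in $\tau$ and inserting the formula for $|V|^2$; this computation needs to be carried out, not asserted.
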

\begin{proof}
Let $\omega _\tau =  \sqrt{-1} \, h_{i\bar j}\, dz_i\wedge d \bar z _j$.
Naturally we set $\tau=- C t+C_0$ where $C$ is to be determined ($C_0$ is arbitrary).
Imposing equation \eqref{eq::connection}, simply define (for now just formally as we don't know as of yet that thus defined, $d\theta$ is closed):
$$d \theta = \sqrt{-1}\left\{ - \frac{\partial {(g_\tau)_{i\bar
    j}}}{\partial \tau } dz_i\wedge d \bar z _j +\partial ^h \log \left(\frac{1}{|V|^2}\right) \wedge\bar  \alpha +\alpha \wedge \bar \partial ^h \log \left(\frac{1}{|V|^2}\right) -d  \log \left(\frac{1}{|V|^2}\right)\wedge \theta \right\}$$
where $\frac{1}{|V|_g^2}$ is yet to be defined.
 
 Then clearly:
 $$d\theta\big | _{\tau=const}=\frac{\partial \omega _\tau}{\partial \tau}$$
 which is in the trivial class, due to the fact that $\omega(t)$ satisfies the Calabi flow. We can therefore take $P=M\times A$ where $A\subset \mathbb C^*$ is an annulus and the rest is clear, modulo defining the value of $|V|_g^2$.
Equations \eqref{Calfloweqeq} and \eqref{eq::CalabiFlowEquiv} imply that the equation $R=\log s+h$ is satisfied in the {\it horizontal directions}. It is then readily seen that it must be,  in order for equation $R=\log s+h$ to be satisfied, one must have:
$$\nabla \left( JV\left( \scal(g_\tau) -h\right) \right)=0$$
and also that in fact one must take the constant $C$ for which $\tau= -C t+ C_0$ to be:
$$-C:= JV\left( \scal(g_\tau) -h\right) .$$
This very equation defines $|V|_g^2$ though, as one has $JV= |V|_g^2\, \frac{\partial }{\partial \tau}$ which yields:
\begin{equation} \label{eq::definition-of-w} |V|_g^2= \frac{-C}{\frac{\partial }{\partial \tau}\left( \scal (g_\tau)-h\right)}= \frac{1}{\frac{\partial }{\partial t}\left( \scal (g_\tau)-h\right)}\end{equation}
One can therefore choose $h$ such that $|V|_g^2>0$, provided:
$$\frac{\partial  \scal(g_\tau)}{\partial \tau}>-c$$
for some uniform $c>0.$
It is now also easy to see from the Calabi flow equation that the form $d\theta$ is closed --since $\frac{1}{|V|_g^2}= \frac{\partial }{\partial t}\left( \scal (g_\tau)-h\right)$. In fact, the closedness of $d\theta$ is equivalent to requiring:
$$\frac{\partial^2 {(g_\tau)_{i\bar
    j}}}{\partial \tau ^2} =- \frac{\partial  ^2}{\partial z_i \partial \bar z_j}\left( \frac{1}{|V|_g^2}  \right)$$
 \noindent
 Using equation \eqref{eq::definition-of-w} and the equation of the Calabi flow, this is readily seen to be equivalent to the condition:
 $$\frac{\partial }{\partial \tau }\left( \frac{\partial  ^2( \scal (g_\tau)-h)}{\partial z_i \partial \bar z_j}\right)= - \frac{\partial  ^2}{\partial z_i \partial \bar z_j}\left(  \frac{\frac{\partial }{\partial \tau}\left( \scal (g_\tau)-h\right)} {-C} \right)$$
 which clearly holds.
\end{proof}

The converse to Theorem \ref{thm::nKRflow} has been treated in \cite{LT}, while the converse to Theorem \ref{thm::KRflow} is immediately adapted form there.

\end{document}